\RequirePackage{ifpdf} 
\ifpdf
\documentclass[11pt,pdftex]{article}
\else
\documentclass[11pt,dvips]{article}
\fi

\ifpdf
\fi
\usepackage[bookmarks, 
colorlinks=false, backref,plainpages=false]{hyperref}
\usepackage[english]{babel}
\usepackage{graphics}
\usepackage{epsfig}
\usepackage{amsmath,amssymb,amsthm}

\setlength{\topmargin}{-.3in} 
\setlength{\textheight}{8.4in}
\setlength{\textwidth}{5.9in}      %
\setlength{\oddsidemargin}{0.2in}  %
\setlength{\evensidemargin}{0.2in} %
\setlength{\evensidemargin}{\oddsidemargin}
\renewcommand{\theequation}{\thesection.\arabic{equation}}

\newtheorem{thm}{Theorem}[section]

\newtheorem{lem}[thm]{Lemma}

\newtheorem{defn}[thm]{Definition}
\newtheorem{rem}[thm]{Remark}

\begin{document}
\newcommand{\BX}{{\bf X}}
\newcommand{\cv}{{\cal V}}
\newcommand{\cW}{{\cal W}}
\newcommand{\co}{{\cal O}}

\renewcommand{\theequation}{\thesection.\arabic{equation}}
\def\@eqnnum{{\reset@font\rm (\theequation)}}

\newtheorem{theorem}{Theorem}[section]
\newtheorem{corollary}{Corollary}[section]
\newtheorem{lemma}{Lemma}[section]
\newtheorem{proposition}{Proposition}[section]
\newtheorem{conjecture}{Conjecture}[section]
\newtheorem{remark}{Remark}[section]
\newtheorem{definition}{Definition}[section]
\newtheorem{problem}{Problem}[section]

\def\abstract{
\advance \rightskip by 10mm
\advance \leftskip by 10mm
\vspace{-0.8em}
\noindent
\small{\bf Abstract.}
}
\def\endabstract{\par\normalsize\rm}

\def\Xint#1{\mathchoice
{\XXint\displaystyle\textstyle{#1}}%
{\XXint\textstyle\scriptstyle{#1}}%
{\XXint\scriptstyle\scriptscriptstyle{#1}}%
{\XXint\scriptscriptstyle\scriptscriptstyle{#1}}%
\!\int}
\def\XXint#1#2#3{{\setbox0=\hbox{$#1{#2#3}{\int}$}
\vcenter{\hbox{$#2#3$}}\kern-.5\wd0}}
\def\ddashint{\Xint=}
\def\dashint{\Xint-}

\def\a{\alpha}
\def\b{\beta}
\def\d{\delta}\def\D{\Delta}
\def\e{\epsilon}
\def\g{\gamma}\def\G{\Gamma}
\def\k{\kappa}
\def\lam{\lambda}\def\Lam{\Lambda}
\renewcommand\o{\omega}\renewcommand\O{\Omega}
\def\s{\sigma}\def\S{\Sigma}
\renewcommand\t{\theta}\def\vt{\vartheta}
\newcommand{\vphi}{\varphi}
\def\z{\zeta}

\newcommand{\tsigma}{\tilde{\s}}
\newcommand{\tbsigma}{\tilde{\bsigma}}
\def\te{\tilde{\e}}
\def\tu{\tilde{u}}

\newcommand{\bchi}{\mbox{\boldmath$\chi$}}
\newcommand{\bdelta}{\mbox{\boldmath$\delta$}}
\newcommand{\bepsilon}{\mbox{\boldmath$\epsilon$}}
\newcommand{\bfeta}{\mbox{\boldmath$\eta$}}
\newcommand{\bgamma}{\mbox{\boldmath$\gamma$}}
\newcommand{\bomega}{\mbox{\boldmath$\omega$}}
\newcommand{\bvphi}{\mbox{\boldmath$\varphi$}}
\newcommand{\bphi}{\mbox{\boldmath$\phi$}}
\newcommand{\bPhi}{\mbox{\boldmath$\Phi$}}
\newcommand{\bpsi}{\mbox{\boldmath$\psi$}}
\newcommand{\bPsi}{\mbox{\boldmath$\Psi$}}
\newcommand{\bsigma}{\mbox{\boldmath$\sigma$}}
\newcommand{\btau}{\mbox{\boldmath$\tau$}}
\newcommand{\bxi}{\mbox{\boldmath$\xi$}}
\newcommand{\brho}{\mbox{\boldmath$\rho$}}

\def\bk{\boldsymbol{\kappa}}
\def\bmu{\boldsymbol\mu}
\def\bxi{\boldsymbol{\xi}}
\def\bz{\boldsymbol{\zeta}}

\def\ba{{\bf a}}
\def\bb{{\bf b}}
\def\bc{{\bf c}}
\def\be{{\bf e}}
\def\bff{{\bf f}}
\def\bg{{\bf g}}
\def\bn{{\bf n}}
\def\bp{{\bf p}}
\def\bq{{\bf q}}
\def\bs{{\bf s}}
\def\bt{{\bf t}}
\def\bu{{\bf u}}
\def\bv{{\bf v}}
\def\bw{{\bf w}}
\def\bx{{\bf x}}
\def\by{{\bf y}}
\def\bzz{{\bf z}}

\def\bD{{\bf D}}
\def\bE{{\bf E}}
\def\bF{{\bf F}}
\def\bH{{\bf H}}
\def\bJ{{\bf J}}
\def\bV{{\bf V}}
\def\bU{{\bf U}}
\def\bW{{\bf W}}
\def\bX{{\bf X}}
\def\bY{{\bf Y}}

\def\cA{{\cal A}}
\def\cC{{\cal C}}
\def\cD{{\cal D}}
\def\cE{{\cal E}}
\def\cF{{\cal F}}
\def\cG{{\cal G}}
\def\cI{{\cal I}}
\def\cJ{{\cal J}}
\def\cK{{\cal K}}
\def\cL{{\cal L}}
\def\cO{{\cal O}}
\def\cP{{\cal P}}
\def\cQ{{\cal Q}}
\def\cR{{\cal R}}
\def\cS{{\cal \Sigma}}
\def\cT{{\cal T}}
\def\cU{{\cal U}}
\def\cV{{\cal V}}

\def\scT{{_\cT}}
\def\sD{{_D}}
\def\sE{{_E}}
\def\sF{{_F}}
\def\sFz{{_{F_z}}}
\def\sK{{_K}}
\def\sI{{_I}}
\def\sb{{_b}}
\def\sN{{_N}}

\def\curl{{{\bf curl} \ }}
\def\rot{{\mbox{rot}\ }}
\def\BPI{{\bf \Pi}}

\def\cth{\cT_h}
\def\ctH{\cT_H}

\def\tJ{\tilde{\J}}

\def\hK{\widehat{K}}
\def\hx{\widehat{x}}
\def\hy{\widehat{y}}
\def\bhv{\widehat{\bv}}

\def\l{\ell}
\def\bl{\boldsymbol{\ell}}
\def\col{\colon}
\def\f12{\frac12}
\def\dfrac{\displaystyle\frac}
\def\dint{\displaystyle\int}
\def\nab{\nabla}
\def\p{\partial}
\def\sm{\setminus}
\def\dsum{\displaystyle\sum}
\newcommand{\pp}[2]{\frac{\partial {#1}}{\partial {#2}}}
\def\bzero{{\bf 0}}

\def\divv{\nab\cdot}
\def\divx{\nab_x\cdot}
\def\divtx{\nab_{t,x}\cdot}
\def\nabx{\nab_x}

\newcommand{\grad}{\nabla}
\newcommand{\curlt}{{\nabla \times}}
\newcommand{\gperp}{\nabla^{\perp}}
\newcommand{\gradt}{\nabla\cdot}

\def\forallqq{\quad\forall\,}
\def\aph{A^{1/2}}
\def\amh{A^{-1/2}}

\def\osc{{\rm osc \, }}

\def\Im{{\rm Im}}
\newcommand{\tr}{{\rm tr}}
\def\divvr{{\rm div}}
\def\curllr{{\rm curl}}
\def\curll{{\rm curl}}
\def\curl{{\bf curl}}
\newcommand{\bgrad}{{\bf grad}}
\newcommand\diam{\mathrm{diam\,}}
\renewcommand\Im{\mathrm{Im\,}}
\def\Span{\mbox{Span}}
\def\supp{\mbox{supp\,}}
\newcommand{\trace}{{\rm trace}}

\newcommand{\tri}{|\!|\!|}
\newcommand{\ljump}{\lbrack\!\lbrack}
\newcommand{\rjump}{\rbrack\!\rbrack}
\newcommand{\bdm}{\begin{displaymath}}
\newcommand{\edm}{\end{displaymath}}
\newcommand{\beq}{\begin{equation}}
\newcommand{\eeq}{\end{equation}}
\newcommand{\beqa}{\begin{eqnarray}}
\newcommand{\eeqa}{\end{eqnarray}}
\newcommand{\beqas}{\begin{eqnarray*}}
\newcommand{\eeqas}{\end{eqnarray*}}
\newcommand{\ul}{\underline}
\newcommand{\wh}{\widehat}
\newcommand{\la}{\langle}
\newcommand{\ra}{\rangle}

\newcommand{\Lt}{L^2(\Omega)}
\newcommand{\Lts}{L^2(\Omega)^2}
\newcommand{\Ltc}{L^2(\Omega)^3}
\newcommand{\Ho}{H^1(\Omega)}
\newcommand{\Hoh}{H^1(\wh{\Omega})}
\newcommand{\Hoi}{H^1(\Omega_i)}
\newcommand{\Hos}{H^1(\Omega)^2}
\newcommand{\Hoc}{H^1(\Omega)^3}
\newcommand{\Hoch}{H^1(\wh{\Omega})^3}
\newcommand{\Hoci}{H^1(\Omega_i)^3}
\newcommand{\Hoz}{H^1_0(\Omega)}
\newcommand{\Ht}{H^2(\Omega)}
\newcommand{\Hti}{H^2(\Omega_i)}
\newcommand{\Hts}{H^2(\Omega)^2}
\newcommand{\Htc}{H^2(\Omega)^3}
\newcommand{\Htz}{H^0(\Omega)}
\newcommand{\Hh}{H^{1/2}(\Gamma)}
\newcommand{\Hhi}{H^{1/2}(\Gamma_i)}
\newcommand{\Hmh}{H^{-1/2}(\Gamma)}
\newcommand{\Hdiv}{H(\divvr;\,\Omega)}
\newcommand{\Hdivh}{H(\divv;\,\wh \Omega)}
\newcommand{\hcurl}{H(\curl\,A;\,\Omega)}
\newcommand{\Hcurl}{H(\curll\,A;\,\Omega)}
\newcommand{\Hcrl}{H(\curll\,;\,\Omega)}
\newcommand{\hcrl}{H(\curl\,;\,\Omega)}
\newcommand{\Hcrlh}{H(\curll\,;\,\wh\Omega)}
\newcommand{\hcrlh}{H(\curl\,;\,\wh\Omega)}
\newcommand{\Wdiv}{\BW_0(\mbox{\divv}\,;\,\Omega)}
\newcommand{\Wcurl}{\BW_0(\mbox{\curl}\,A;\,\Omega)}
\newcommand{\WcrossV}{\BW \times V}

\def\calS{{\cal S}}
\def\calT{{\cal T}}
\def\cB{{\cal B}}
\def\cH{{\cal H}}
\def\ba{{\mathbf{a}}}
\def\cM{{\cal M}}
\def\cN{{\cal N}}

\def\bE{{\bf E}}
\def\bS{{\bf S}}
\def\br{{\bf r}}
\def\bW{{\bf W}}
\def\bLambda{{\bf \Lambda}}

\newcommand{\lJump}{[\![}
\newcommand{\rJump}{]\!]}
\newcommand{\jump}[1]{[\![ #1]\!]}

\newcommand{\sd}{\bsigma^{\Delta}}
\newcommand{\st}{\tilde{\bsigma}}
\newcommand{\sh}{\hat{\bsigma}}
\newcommand{\rd}{\brho^{\Delta}}

\newcommand{\WH}{W\!H}
\newcommand{\NE}{N\!E}

\newcommand{\ND}{N\!D}
\newcommand{\BDM}{B\!D\!M}

\newcommand{\sT}{{_T}}
\newcommand{\sRT}{{_{RT}}}
\newcommand{\sBDM}{{_{BDM}}}
\newcommand{\sWH}{{_{WH}}}
\newcommand{\sND}{{_{ND}}}
\newcommand{\sV}{_\cV}

\newcommand{\dd}{\underline{{\mathbf d}}}
\newcommand{\C}{\rm I\kern-.5emC}
\newcommand{\R}{\rm I\kern-.19emR}
\newcommand{\W}{{\mathbf W}}
\def\3bar{{|\hspace{-.02in}|\hspace{-.02in}|}}
\newcommand{\A}{{\mathcal A}}

\title {Recovery-Based Error Estimators for Diffusion Problems:\\
Explicit Formulas}
\author{Zhiqiang Cai\thanks{
Department of Mathematics, Purdue University, 150 N. University
Street, West Lafayette, IN 47907-2067, caiz@purdue.edu.
This work was supported in part by the National Science Foundation
under grant DMS-1217081.}
\and Shun Zhang\thanks{Department of Mathematics,
City University of Hong Kong, Kowloon, Hong Kong,
shun.zhang@cityu.edu.hk.}}
 \date{\today}
 \maketitle

\begin{abstract}
We introduced and analyzed robust
recovery-based a posteriori error estimators for various lower order
finite element approximations to interface problems in
\cite{CaZh:09, CaZh:10a}, where the recoveries of the flux
and/or gradient are implicit (i.e., requiring solutions of global problems with mass
matrices). In this paper, we develop fully explicit
recovery-based error estimators for lower order 
conforming, mixed, and nonconforming finite element 
approximations to 
diffusion problems with full coefficient tensor.
When the diffusion coefficient is piecewise constant scalar and its
distribution is local quasi-monotone, it is shown
theoretically that the estimators developed in this paper are robust
with respect to the size of jumps. Numerical experiments are also
performed to support the theoretical results.
\end{abstract}

\section{Introduction}\label{intro}
\setcounter{equation}{0}

A posteriori error estimation for finite element methods has been
extensively studied for the past three decades (see, e.g., books by
Verf\"urth \cite{Ver:96, Ver:13}, Ainsworth and Oden \cite{AiOd:00},
Babu\v{s}ka and Strouboulis \cite{BaSt:01}, and references therein).
The widely adapted estimator is probably the Zienkiewicz-Zhu (ZZ) recovery-based
error estimator \cite{ZiZh:87, ZiZh:92} due to its easy implementation, generality, and
ability to produce quite accurate estimations. By first recovering 
a gradient in the conforming $C^0$
linear vector finite element space from the numerical gradient,
the ZZ estimator is defined as the $L^2$ norm of the difference 
between the recovered and the numerical gradients. 

Despite popularity of the ZZ estimator, 
it is also well known that the ZZ estimator over-refines regions where there are no error, and hence, they
fail to reduce the global error. This is shown by Ovall in
\cite{Ova:06b} through some interesting and realistic examples. 
Such a failure is
simply caused by using continuous functions (recovered gradient/flux)
to approximate discontinuous functions (true gradient/flux) in the
recovery procedure. By recovering flux and/or gradient in the 
respective $H(\divvr;\O)$  and $H(\curllr; \O)$ conforming finite element spaces,
in \cite{CaZh:09, CaZh:10a}, 
we developed and studied robust recovery-based implicit and explicit error estimators  for 
various lowest order finite element
approximations to the interface problems. The implicit error
estimator requires solution of a global $L^2$ minimization problem, and the explicit
error estimator uses a simple edge average.

The explicit recovery introduced in \cite{CaZh:09, CaZh:10a} is limited to the 
Raviart-Thomas ($RT$) \cite{BrFo:91} and  the first type of N\'ed\'elec ($\NE$)  \cite{Ned:80} elements 
of the lowest order for the respective flux and gradient recoveries. 
This simple averaging approach may not be extended to the 
Brezzi-Douglas-Marini ($B\!D\!M$) \cite{BrFo:91} and the second type of N\'ed\'elec  \cite{Ned:86} ($N\!D$) elements
of the lowest order  and to the diffusion problem with full coefficient tensor.
The purpose of this paper is first to introduce a general approach for
constructing explicit recovery of the flux/gradient for various lower order finite element  approximations to 
the diffusion problem with the full coefficient tensor. The approach, similar to 
\cite{CaZh:11}, is to localize the 
implicit recovery through a partition of the unity. 
For various lower order elements, we are able to reduce the local patch problem
to the edge/face patch which contains at most two elements. Hence, by solving 
a local minimization problem on this
two-element patch, we explicitly recover 
the flux/gradient. We then define the corresponding
estimators and establish their reliability and efficiency. 
When the diffusion coefficient is piecewise constant and
its distribution is local quasi-monotone, we are able to show 
theoretically that these estimators are robust
with respect to the size of jumps. For a benchmark test problem, whose coefficient
is not local quasi-monotone, numerical results also show the robustness of the estimators. 

For the conforming finite element approximation to the interface problem, robust error estimators have been studied by Bernardi and Verf\"urth \cite{BeVe:00} and Petzoldt \cite{Pet:02} for the residual-based estimator, Luce and Wohlmuth \cite{LuWo:04} for an equilibrated estimator on a dual mesh, and by us \cite{CaZh:09} for the recovery-based error estimator. Ainsworth in \cite{Ain:05, Ain:07} studied robust error estimators for nonconforming and mixed methods, respectively. Robust error estimators for locally conserved methods were studied by Kim \cite{Kim:07}. Recently, we studied robust recovery-based estimators for lowest order nonconforming, mixed, and discontinuous Galerkin methods (see \cite{CaZh:10a, CaYeZh:11}) via the $L^2$ recovery and for higher-order conforming elements in \cite{CaZh:10b} via a weighted $H(\divvr)$ recovery. Robust equilibrated residual error estimator are constructed by us in \cite{CaZh:11}. For interface problems with flux jumps, we studied robust residual- and recovery-based error estimators in \cite{CaZh:10c}.

The paper is organized as follows. Section 2 describes the diffusion
problem and its variational forms. Conforming, mixed, and nonconforming finite 
element methods are presented
in Section 3. Section 4 introduces the explicit recoveries
of the flux/gradient for those finite element approximations. The corresponding 
a posteriori error
estimators are introduced in Section 5 and their reliability and
efficiency bounds are established in Section 6. Finally, Section 7 provides
numerical results for a benchmark test problem.

\section{Diffusion Problem and Variational Form}\label{problems}
\setcounter{equation}{0}

Let $\O$ be a bounded polygonal domain in $\Re^2$,
with boundary $\p \O = \bar{\Gamma}_\sD \cup\bar{ \Gamma}_\sN$,
$\Gamma_\sD\cap \Gamma_\sN = \emptyset$,  and $\mbox{measure}\,(\Gamma_D)\not= 0$,
and let $\bn$ be the outward unit vector normal to the boundary.
Consider diffusion equation
\begin{equation}\label{pde}
    -\gradt (A(x) \grad u)  =  f   \quad\mbox{in} \quad  \O\\
\end{equation}
with boundary conditions
\beq\label{bc}
    -A\grad u \cdot \bn = g_\sN \quad\mbox{on} \quad \Gamma_\sN \quad
    \mbox{and}\quad u = g_\sD \quad\mbox{on}\quad \Gamma_\sD.
\eeq 
For simplicity of presentation,  assume that $f \in L^2(\O)$, that $g_\sD$  and
$g_\sN$ are piecewise affine functions and constants, respectively,
and that $A$ is a symmetric, positive definite piecewise constant
matrix.

Here and thereafter, we use standard notations and
definitions for the Sobolev spaces. Let
 \[
 H^1_{g,D}(\O)=\{v\in H^1(\O)\,|\, v=g_\sD\mbox{ on }\Gamma_D\}
 \quad\mbox{and}\quad
 H^1_D(\O)=H^1_{0,D}(\O).
 \]
Then the corresponding variational problem is to  find $u \in
H^1_{g,D}(\O)$ such that
 \beq \label{vp}
    a(u,\,v)\equiv (A\grad u, \grad v)
    = (f, v) - ( g_\sN, v )_{\Gamma_\sN}\equiv  f(v)
    \quad \forall \; v\in H^1_D(\O),
 \eeq
where $(\cdot, \cdot)_{\omega}$ is the $L^2$ inner product on the domain $\o$.
The subscript $\omega$ is omitted when $\o=\O$.

In two dimensions, for $\btau= (\tau_1,\,\tau_2)^t$, define the
divergence and curl operators by
 \[
 \divv \btau:= \dfrac{\p\tau_1}{\p x_1} + \dfrac{\p\tau_2}{\p x_2}
 \quad\mbox{and}\quad
 \curlt \btau := \dfrac{\p\tau_2}{\p x_1} - \dfrac{\p\tau_1}{\p
 x_2},
 \]
respectively. For a scalar-valued function $v$, define the operator
$\gperp$ by
$$
\gperp v: = (\dfrac{\p v}{\p x_2},\,- \dfrac{\p v}{\p
x_1})^t.
$$
We shall use the following Hilbert spaces
\begin{eqnarray*}
 && H(\divvr;\O)=\{\btau\in L^2(\O)^2 |\,\gradt\btau\in L^2(\O)\}\\[2mm]
  \mbox{ and} &&
 H(\curll;\O)=\{\btau\in L^2(\O)^2 |\,\curlt
 \btau \in L^2(\O)\}
 \end{eqnarray*}
equipped with the norms
\[
\|\btau\|_{H(\divvr;\,\O)}=\left(\|\btau\|^2_{0,\O}+\|\gradt\btau\|^2_{0,\O}
 \right)^\frac12
 \quad\mbox{and}\quad
 \|\btau\|_{H(\curll;\,\O)}=\left(\|\btau\|^2_{0,\O}+\|\curlt\btau\|^2_{0,\O}
 \right)^\frac12,
\]
respectively. Let
 \begin{eqnarray*}
  H_{g,N}(\divvr;\O)\!
  &=&\{\btau\in H(\divvr;\O) |\,\btau\cdot \bn|_{\Gamma_\sN}= g_\sN\},\quad 
 H_N(\divvr;\O)\!=H_{0,N}(\divvr;\O),\\[2mm]
  \mbox{ and }\,\,
  H_D(\curll;\O)\!
  &=&\{\btau\in H(\curll;\O) |\,\btau\cdot
 \bt\big|_{\Gamma_\sD}=0\},
 \end{eqnarray*}
where $\bn = ( n_1,n_2)^t$ and $\bt =(t_1,t_2)^t  =
(-n_2,n_1)^t$ are the unit vectors outward normal to and
tangent to the boundary $\p\O$, respectively.

Define the flux by
 \[
 \bsigma = -A(x)\grad u \quad\mbox{in }\,\O,
 \]
then the mixed variational formulation is to find $(\bsigma,\,u)\in
H_{g,N}(\divvr;\O)\times L^2(\O)$  such that
\begin{equation}\label{mixed}
 \left\{\begin{array}{lclll}
 (A^{-1}\bsigma,\,\btau)-(\divv \btau,\, u)
 &=& -(\btau \cdot \bn, g_\sD)_{\Gamma_\sD}
  \quad & \forall\,\, \btau \in
 H_N(\divvr;\O),\\[2mm]
 (\divv \bsigma, \,v) &=& (f,\,v)&\forall \,\, v\in L^2(\O).
\end{array}\right.
\end{equation}

\section{Finite Element Approximation}\label{FEMs}
\setcounter{equation}{0}

\subsection{Finite Element Spaces}

For simplicity, consider only triangular elements.
Let $\cT=\{K\}$ be a regular triangulation of the domain $\O$,
and denote by $h_\sK$ the diameter of the element $K$.
We assume that $A$ is piecewise constant matrix on the mesh $\cT$.
Denote the set of all nodes of the triangulation by
$ \cN := \cN_{_I}\cup\cN_{_D}\cup\cN_{_N},
$ where $\cN_{_I}$ is the set of all interior nodes and
$\cN_{_D}$ and $\cN_{_N}$ are the sets of all boundary nodes
belonging to the respective $\overline{\Gamma}_D$ and $\Gamma_N$.
Denote the set of all edges of the triangulation by $
 \cE := \cE_{_I}\cup\cE_{_D}\cup\cE_{_N},
$ where $\cE_{_I}$ is the set of all interior element edges and
$\cE_{_D}$ and $\cE_{_N}$ are the sets of all boundary edges
belonging to the respective $\Gamma_D$ and $\Gamma_N$. 

For each $F \in \cE$, 
denote by
$\bn_\sF= (n_{1,\sF},n_{2,\sF})^t$ a unit vector normal to $F$;
then $\bt_\sF =
-(n_{2,\sF},n_{1,\sF})^t$ is a unit vector tangent to $F$. 
Let
$K_\sF^-$ and $K_\sF^+$ be two elements sharing the common edge $F$
such that the unit outward normal vector of $K_\sF^-$ coincide with
$\bn_\sF$. When $F\in \cE_\sD \cup \cE_\sN$, $\bn_\sF $ is the unit
outward vector normal to $\p\O$ and denote by $K_\sF^-$ the element
having the edge $F$.   
For interior edges $F\in \cE_\sI$, the selection of $\bn_\sF$ is arbitrary but globally fixed.
For a function $v$ defined on
$K_\sF^-\cup K_\sF^+$, denote its traces on $F$ by $v|_\sF^-$ and
$v|_\sF^+$, respectively. 
The jump over the edge $F$ is denoted by
$$
\jump{v}_\sF := \left\{
\begin{array}{llll}
v|_\sF^- - v|_\sF^+ & F\in \cE_\sI, \\
v|_\sF^- & F\in\cE_\sD \cup \cE_\sN.
\end{array}
\right.
$$
(When there is no ambiguity, the  subscript or superscript $F$ in
the designation of jump and other places will be dropped.) 
%

For each $K\in\cT$, let
$P_k(K)$ be the space of polynomials of degree $k$. Denote the linear 
conforming and nonconforming (Crouzeix-Raviart) finite
element spaces \cite{Cia:78, GiRa:86} associated with the
triangulation $\cT$ by
 \[\begin{array}{ll}
  &S = \{v\in H^1(\O)\,\big|\,v|_K\in
 P_1(K)\quad\forall\,\,K\in\cT\}\\[2mm]
 and 
 & S^{nc}= \{v\in L^2(\O)\,\big|\,v|_K\in
 P_1(K)\,\,\forall\,\,K\in\cT,
 \mbox{ and } v \,\mbox{is continuous at}\,m_\sF  \;\forall\, F \in  \cE_\sI\},
 \end{array}\]
respectively. Let
 \[
 \begin{array}{ll}
S_{g,\sD}= \{v\in S |\,v=g_\sD \,\,\mbox{on}\, \, \Gamma_\sD\},
& \quad S^{nc}_{g,\sD}=\{v\in S^{nc}\,|\, v(m_\sF) =g_\sD(m_\sF) \,\,\forall\,\, F\in \cE_\sD\},\\[2mm]
 S_{\sD}= \{v\in S |\,v=0 \,\,\mbox{on}\, \, \Gamma_\sD\}, 
 & \quad S^{nc}_{\sD}=\{v\in S^{nc}\,|\, v(m_\sF) =0 \,\,\forall\,\, F\in \cE_\sD\}.
\end{array} \]

The $\Hdiv$ conforming Raviart-Thomas (RT) and Brezzi-Douglas-Marini
(BDM) spaces \cite{BrFo:91} of the lowest order are defined by
 \begin{eqnarray*}
 && RT=\{\btau\in H(\divvr;\,\Omega)\big|\,
 \btau|_K\in RT(K)\,\,\forall\,K\in\cT\}\\[2mm]
 \mbox{and} &&
 \BDM=\{\btau\in H(\divvr;\,\Omega)\big|\,
 \btau|_K\in BDM(K)\,\,\,\,\forall\,\,K\in\cT\},
 \end{eqnarray*}
 respectively, where $RT(K)=P_0(K)^2 +(x_1,x_2)^t\,P_0(K)$ and $\BDM(K)=P_1(K)^2$.
The $\Hcrl$-conforming first \cite{Ned:80} 
and second \cite{Ned:86} types of N\'ed\'elec
spaces of the lowest order are defined by
 \begin{eqnarray*}
&& \NE\!=\{\btau\in H(\curll;\Omega)\big|\,
 \btau|_K\in \NE(K)\,\forall\,K\in\cT\} \\[2mm]
  \mbox{and} &&
 \ND=\{\btau\in H(\curll;\Omega)\big|\,
 \btau|_K\in  \ND(K)\,\,\forall\,K\in\cT\},
 \end{eqnarray*}
 respectively, where $\NE(K)\!=\!P_0(K)^2+(x_2,-x_1)^tP_0(K)$ and $\ND(K) =P_1(K)^2$.
For convenience, denote $RT(K)$ and $\BDM(K)$ by $\cV(K)$,
$RT$ and $\BDM$ by $\cV$, $\NE(K)$ and $\ND(K)$ by $\cW(K)$, and $\NE$
and $\ND$ by $\cW$. Also, let
$$
P_0 =\{v\in L^2(\O)\,\big|\, v|_K \in P_0(K) \,\, \forall\,\, K\in
\cT\}.
$$
Definitions and properties of bases for  the $RT$, $BDM$,
$\NE$, and $\ND$ spaces  on an element $K$ are presented
in Appendix A.

Finally, we define the discrete gradient, divergence, and curl
operators by
$$
 (\grad_h v)|_K := \grad(v|_K),
 \quad (\grad_h\cdot \btau)|_K := \divv(\btau|_K),
 \quad\mbox{and}\quad
 (\grad_h\times \btau)|_K := \curlt(\btau|_K)
$$
for all $K\in \cT$, respectively.

\subsection{Finite Element Approximation}

The conforming finite element method is to seek $u_{c} \in
S_{g,\sD}$ such that
\begin{eqnarray} \label{problem_c}
  ( A\grad u_{c},\, \grad v) &=& (f,v)  \qquad \,\forall\, v\in
  S_{\sD},
\end{eqnarray}
the mixed finite element method is to seek $(\bsigma_{m},u_m) \in
\left(\cV\cap H_{g,\sN}(\divvr;\,\O)\right)\times P_0$ such that
\begin{equation}\label{problem_mixed}
 \left\{\begin{array}{lclll}
 (A^{-1}\bsigma_m,\,\btau)-(\divv \btau,\, u_m)&=& -(\btau\cdot\bn, g_\sD)_{\Gamma_\sD}
 \quad & \forall\,\, \btau \in \cV \cap H_N(\divvr;\O),\\[2mm]
 (\divv \bsigma_m,\, v) &=& (f,\,v)&\forall \,\, v\in P_0,
\end{array}\right.
\end{equation}
and the nonconforming finite element method is to find $u_{nc} \in
S^{nc}_{g,\sD}$ such that 
\begin{eqnarray} \label{problem_nc}
  ( A\grad_h u_{nc},\, \grad_h v) &=& (f,v)  \qquad \,\forall\, v\in
  S^{nc}_{\sD}.
\end{eqnarray}

\section{Explicit Flux and Gradient Recoveries}
\setcounter{equation}{0}

In \cite{CaZh:09, CaZh:10a, CaYeZh:11}, we studied flux and/or gradient recoveries
for various lower order finite element approximations to the diffusion problem. A unique feature
of those recoveries is that the recovered quantities are in proper finite element spaces. However, 
those recoveries require solutions of global problems with mass matrices. In this section,
we introduce explicit recovery procedures. This is done by first decomposing 
the error of the flux/gradient through a partition of the unity
and then approximating the flux/gradient error by local patch problems. 
The partition of the unity is based on nodal basis functions of 
the non-conforming linear element, and hence the local patch problems contain 
at most two elements.

\subsection{Explicit Flux Recovery for Conforming Method}

Let $u_c$ be the conforming linear finite element approximation defined in 
(\ref{problem_c}). Denote by 
  \[
\hat{\bsigma}_c=-A\grad u_c, \quad e_c = u-u_c, \quad\mbox{and}\quad 
\bE_c=\bsigma-\hat{\bsigma}_c = -A\grad e_c,
 \]
the numerical flux, the solution error, and the flux error, respectively. 
In this section, we introduce an explicit flux recovery procedure. 
This will be done through approximating the error flux $\bE_c$
by local patch problems. 

To this end, let $\phi_{\sF}^{nc}(\bx) \in S^{nc}$ be the nodal
basis function of the linear nonconforming element associated with the
edge $F \in \cE$. Denote by 
 \[\o_{\sF} =
\mbox{supp}(\phi_{\sF}^{nc}(\bx))
 \]
the support of $\phi_{\sF}^{nc}$,
which contains either two or one triangles for the respective interior 
or boundary edges. 
Denote the collection of triangles in $\o_{\sF}$ by
 \[
 \cT_\sF = \{ K\in \cT: \o_\sF \cap K \neq \emptyset \}.
 \]
Let $\cE_{b,\sF}$ be the collection of the boundary edges of
$\o_\sF$ that does not contain the edge $F$. Then
 the collection of edges of triangles in $ \cT_\sF$ is given by
$$
\cE_\sF = \{ E \in \cE : E \cap \overline{\o}_\sF \neq \emptyset\} =
\{F\} \cup \cE_{b,\sF} \quad \forall\,\, F \in \cE.
$$
It is also easy to check that 
 \begin{equation}\label{non-basisf}
 \phi_\sF^{nc}(\bx) \equiv 1\quad\mbox{on }\,\, F
 \quad\mbox{and}\quad
 \int_{E} \phi_\sF^{nc}\,ds =0
 \quad\forall\,\, E \in \cE_{b,\sF}.
 \end{equation}
The set of functions $\{\phi_{\sF}^{nc}\}_{F\in\cE}$ forms a partition
of the unity in $\O$:
\[
    \sum_{F\in \cE} \phi_\sF^{nc}(\bx) \equiv 1 \quad \forall\, \bx\in
    \O,
\]
which leads to the following decomposition of the error flux:
 \[
 \bE_c= \sum_{F\in \cE} \big(\phi_{\sF}^{nc}\,\bE_c\big)
 = \sum_{F\in \cE} \big(-\phi_{\sF}^{nc} A\grad
 e_c\big).
 \]
 
On edge $F\in \cE_\sI\cup\cE_\sN$, denote the normal components of the numerical flux by 
\beq\label{n-flux}
 \hat{\sigma}^+_{c,\sF}=\big(\hat{\bsigma}_c|_{K^+_\sF} \cdot\bn_\sF\big)|_{\sF}
 \quad\mbox{and}\quad
 \hat{\sigma}^-_{c,\sF}=\big(\hat{\bsigma}_c|_{K^-_\sF} \cdot\bn_\sF\big)|_{\sF}
 \eeq
and the jump of the numerical flux  by
\[
 j^c_{f,\sF}  
    \equiv\jump{\hat{\bsigma}_c\cdot \bn_\sF}_\sF    
     = \left\{\begin{array}{lll}
   \hat{\sigma}^-_{c,\sF}- \hat{\sigma}^+_{c,\sF}, & \forall\,\, F\in \cE_\sI,\\[3mm]
      \hat{\sigma}^-_{c,\sF}-g_\sN,  &\forall\,\, F\in \cE_\sN.
 \end{array}
 \right.
 \]

By the first equality in (\ref{non-basisf}) and the continuity of the normal component of the true flux,
it is easy to see that the jump of the normal component of the local error flux 
$\phi_{\sF}^{nc} \,\bE_c$ on edge $F\in \cE_\sI\cup\cE_\sN$ satisfies
 \beq\label{c-jump}  
  \jump{\phi_\sF^{nc}\,\bE_c \cdot \bn_\sF}_\sF  =- j^c_{f,\sF} .
    \eeq
 Note that 
\[
 \phi_\sF^{nc}\,\bE_c \cdot \bn_\sF
 =\left(-\phi_\sF^{nc}A \grad e_c\right) \cdot \bn_\sE \approx 0
  \quad\mbox{on }\,\,  E \in \cE_{b, \sF}.
 \]
Therefore, we introduce the following approximation to the local error 
flux $\phi_\sF^{nc}\,\bE_c$ on the local patch $\o_\sF$:
\begin{itemize}
\item[(1)] for every $F\in \cE_\sD$, set 
 \beq\label{rt-D}
 \sd_{c,\sF} =0 \quad \mbox{on }\,\, K^-_\sF;
 \eeq
 
 \item[(2)] for  every edge $F\in \cE_\sI\cup\cE_\sN$, find $\sd_{c,\sF} \in \cV^c_{_{-1, F}}$ such that
 \beq \label{c-mini}
 \| A^{-1/2}\sd_{c, \sF}  \|_{0,\o_\sF} =
 \min_{\btau \in \cV^c_{_{-1,F}}} \|A^{-1/2}\btau \|_{0,\o_\sF},
 \eeq
where $\cV^c_{_{-1, F}}$ with $\cV= RT$ or $BDM$  is a local 
finite element space defined by
 \[
 \cV^c_{_{-1, F}}
 = \{ \btau\in\! L^2(\omega_\sF\!) \big|\, \btau|_\sK\!\!\in\!\cV(\!K)\,\forall\, K\in \cT_\sF,
\,  \jump{\btau\cdot\bn_\sF}_\sF\!\!
 =\! -j^c_{f,\sF},\,
 \, \btau|_\sE\cdot\bn_{_E}\! = 0\,
 \forall \,E \in\! \cE_{b, \sF}\!\}.
 \]
\end{itemize}

With the approximations defined in (\ref{rt-D}) and (\ref{c-mini}), 
the global approximation to the error flux is then defined by
 \beq\label{app-err-f}
    \sd_c =  \sum_{F\in\cE}  \sd_{c,\sF}
    = \sum_{F\in\cE_\sI}  \sd_{c,\sF}+\sum_{F\in\cE_\sN}  \sd_{c,\sF}.
 \eeq
This yields the following recovered flux for the conforming linear element:
 \beq\label{app-f}
 \bsigma_c = \sd_c +\hat{\bsigma}_c\in H(\divvr;\O).
 \eeq
The fact that $\bsigma_c  \in H(\divvr;\O)$ follows from (\ref{c-jump}) that
 \[
 \jump{\bsigma_c\cdot\bn_\sF}_\sF
 = \jump{(\sd_c+ \hat{\bsigma}_c)\cdot\bn_\sF}_\sF 
 =- j^c_{f,\sF}+ \jump{\hat{\bsigma}_c\cdot\bn_\sF}_\sF=0
  \quad\mbox{on }\,\, F\in\cE_\sI.
  \]

\subsubsection{Solution of (\ref{c-mini})}

The recovered flux defined in (\ref{app-f}) requires solutions of the local problems defined in 
(\ref{c-mini}), which are constrained minimization problems. 
This section studies solutions of (\ref{c-mini}).
 
 
To this end, let $\bphi^{rt}_\sF$ be the local $RT$ basis function
given in (\ref{rt}) in Appendix A, define the global $RT$ basis function associated 
with the edge $F$ by
 \[
 \bpsi^{rt}_{\sF}
 = \left\{\begin{array}{llll}
 \bphi^{rt}_\sF|_{K_\sF^-}, & \bx \in K_\sF^-,  \\[4mm]
 -\bphi^{rt}_\sF|_{K_\sF^+},  & \bx \in K_\sF^+,  \\[4mm]
 0, & \bx \not\in \o_\sF, 
 \end{array}
 \right. \forall\,\, F\in\cE_\sI
 \,\mbox{ and }\,
 \bpsi^{rt}_{\sF}
 = \left\{\begin{array}{llll}
 \bphi^{rt}_\sF|_{K_\sF^-}, & \bx \in K_\sF^-,  \\[4mm]
 0, & \bx \not\in \o_\sF, 
 \end{array}
 \right. \forall\,\, F\in \cE_\sD\cup\cE_\sN .
 \]
for any $F\in\cE_\sI$ and by \[ \bpsi^{rt}_{\sF}  = \left\{\begin{array}{llll}
 \bphi^{rt}_\sF|_{K_\sF^-}& \bx \in K_\sF^-,  \\[4mm]  0& \bx \not\in \o_\sF
 \end{array} \right.  \] for any $F\in\cE_\sD\cup\cE_\sN$. 
Denote by $\bpsi^{rt,-}_{\sF}$ and
$\bpsi^{rt,+}_{\sF}$ the restriction of $\bpsi^{rt}_{\sF}$ on
$K_\sF^-$ and $K_\sF^+$, respectively. 
To solve (\ref{c-mini}), set 
 \beq\label{sigmajN}
 \bsigma^c_{j,\sF}=-j^c_{f,\sF}  \bpsi_\sF^{rt,-}
 \quad\mbox{on }\,\, K_\sF^-
 \eeq
for any $F\in\cE_\sN$ and set 
 \beq\label{sigmajI}
 \bsigma^c_{j,\sF} = \left\{\begin{array}{lll}
 -j^c_{f,\sF}  \bpsi_\sF^{rt,-}, & \mbox{on} & K_\sF^-, \\[4mm]
 0,  & \mbox{on} & K_\sF^+
 \end{array}\right.
 \eeq
for any $F\in \cE_\sI$. By (\ref{proprt}), it is easy to check that for $F\in\cE_\sI\cup\cE_\sN$
 \[
 \jump{\bsigma^c_{j,\sF} \cdot\bn_\sF}_\sF= - j^c_{f,\sF}
 \quad\mbox{and}\quad
 \bsigma^c_{j,\sF} \cdot\bn_\sE=0\quad\mbox{on}\,\,E\in\cE_{b,\sF}.
 \]
Hence, for any Neumman boundary edge $F\in \cE_\sN$, we have 
 \beq\label{rt-N}
 \sd_{c,\sF} =\bsigma^c_{j,\sF}
 \quad\mbox{on }\,\, K_\sF^-,
 \eeq
and for any interior edge $F\in\cE_\sI$, we have
 \[
  \sd_{c,\sF} - \bsigma^c_{j,\sF} \in H(\divvr;\o_\sF)
 \quad\mbox{and}\quad
 \left(\sd_{c,\sF} - \bsigma^c_{j,\sF}\right)|_\sE \cdot \bn_\sE=0\quad\forall\,\, E\in \cE_{b,\sF}.
 \]
Let 
 \[
 \st_{c,\sF} =  \sd_{c,\sF} - \bsigma^c_{j,\sF},
 \quad H_0(\divvr;\o_\sF)=\{ \btau\in H(\divvr;\o_\sF)\, |\,\btau\cdot\bn|_{\partial \o_\sF} = 0\},
 \]
 and
 \[
  \cV^c_{_{F}}
 = \{ \btau\in H_0(\divvr;\o_\sF) \,|\, \btau|_\sK\in  \cV(K)\,\,\forall\,\, K\in \cT_\sF
\},
 \]
then the minimization problem in (\ref{c-mini}) for $F\in \cE_\sI$ is equivalent to finding 
$\st_{c,\sF} \in \cV^c_{_{F}}$ such that 
  \[
  \| A^{-1/2}\left(\st_{c,\sF}  +\bsigma^c_{j,\sF}\right) \|_{0,\o_\sF} =
  \min_{\btau \in \cV^c_{_{F}}} \|A^{-1/2}\left(\btau+\bsigma^c_{j,\sF}\right) \|_{0,\o_\sF}.
  \]
The corresponding variational formulation is to find $\st_{c,\sF} \in \cV^c_{_{F}}$ such that 
 \beq \label{c-var}
 \left(A^{-1}\st_{c,\sF}  ,\,\btau\right)_{0,\o_\sF} =
 -\left(A^{-1}\bsigma^c_{j,\sF} ,\,\btau\right)_{0,\o_\sF}
 \quad\forall\,\,\btau\in \cV^c_{_{F}}.
 \eeq
Note that for any interior edge (\ref{c-var}) has either one (RT)
or two (BDM) unknowns. Their explicit formulas will be introduced in
the subsequent section. 

\subsubsection{Explicit Formula for Flux Recovery} 

This section derives explicit formulas for the solution of  (\ref{c-var}) 
and, hence, for the $RT$ and $\BDM$ recoveries. 

First, we consider the $RT$ recovery. Since 
$\st_{c,rt,\sF} \in RT^c_{_{F}}\subset H_0(\divvr;\o_\sF)$, we have
 \[
 \st_{c,rt,\sF} =\sd_{c,rt,\sF} -  \bsigma^c_{j,\sF}
 = a_{rt,\sF} j^c_{f,\sF} \bpsi_{\sF}^{rt}
 \quad\mbox{on}\,\,\omega_\sF
 \]
for all $F\in \cE_\sI$, which, together with (\ref{c-var}), yields 
 \[
 a_{rt,\sF}
 = \dfrac{ \beta_{rt,\sF}^- } { \beta_{rt,\sF}^- +\beta_{rt,\sF}^+  }
 \quad\mbox{with}\quad
 \beta_{rt,\sF}^\pm =  \left(A^{-1} \bpsi_{\sF}^{rt},\,\bpsi_{\sF}^{rt}\right)_{K_\sF^\pm}.
  \]
Hence, for any interior edge $F\in\cE_\sI$, we have
 \beq\label{rt-I}
 \sd_{c,rt,\sF} = \st_{c,rt,\sF} +  \bsigma^c_{j,\sF}
 = \left\{
 \begin{array}{lll}
  -\left(1-a_{rt,\sF}\right)
  j^c_{f,\sF} \bpsi_{\sF}^{rt,-},
    & \mbox{on} & K_\sF^-, \\[3mm]
 a_{rt,\sF}
    j^c_{f,\sF} \bpsi_{\sF}^{rt,+} , & \mbox{on} & K_\sF^+.
 \end{array}
 \right.
\eeq
Combining with (\ref{app-err-f}) and (\ref{rt-N}),
the global approximation to the error flux is given by
\beq\label{app-err-f-rt}
    \sd_{c,rt} 
    = \sum_{F\in\cE_\sI}  \sd_{c,rt,\sF}
    -\sum_{F\in\cE_\sN}  j^c_{f,\sF}  \bpsi_\sF^{rt,-}
 \eeq
 with $\sd_{c,rt,\sF}$ defined in (\ref{rt-I}).
 
 Since the numerical flux is a piecewise constant vector, it has the following 
 local representation on each element $K\in\cT$ (see Lemma 4.4 of \cite{CaZh:09}):
\[
\hat{\bsigma}_c|_\sK = \sum_{F\in \p K} \left( \hat{\bsigma}_c|_\sF\cdot \bn_\sK \right) 
\bphi_{\sF}^{rt},
\]
where $\bn_\sK$ is the unit outward vector normal to $\partial K$.  
Globally, for any interior edge $F\in\cE_\sI$, we have
 \beq\label{n-flux-rep}
 \hat{\bsigma}^c_{\sF}
 = \left\{
 \begin{array}{lll}
   \hat{\sigma}^-_{c,\sF}  \bpsi_{\sF}^{rt,-},
    & \mbox{on} & K_\sF^-, \\[5mm]
   \hat{\sigma}^+_{c,\sF}  \bpsi_{\sF}^{rt,+},
     & \mbox{on} & K_\sF^+.
 \end{array}
 \right.
 \eeq
Now, by (\ref{app-f}) and (\ref{app-err-f-rt}), the explicit formula for the recovered flux
using the RT element is then
 \beq\label{rt-explicit}
 \bsigma_{c}^{rt} = \sd_{c,rt}+\hat{\bsigma}_c = \sum_{F\in\cE}
 \sigma_{_{c,F}}^{rt}  \bpsi_\sF^{rt},
 \eeq
where the nodal value (i.e., the normal component of
$\bsigma_{c}^{rt}$ on the edge $F$), $\sigma_{_{c,F}}^{rt}$, is
given by
 \beq\label{rt-coef}
 \sigma_{_{c,F}}^{rt} = \left\{\begin{array}{llll}
 a_{rt,\sF} \hat{\sigma}^-_{c,\sF}
 +\left(1-a_{rt,\sF}\right)
 \hat{\sigma}^+_{c,\sF},
  & F\in\cE_\sI, \\[2mm]
   g_\sN, & F\in\cE_\sN,\\[2mm]
\hat{\sigma}^-_{c,\sF},  & F\in\cE_\sD
 \end{array}
 \right.
 \eeq
with the nodal values of the numerical fluxes, 
$\hat{\sigma}^+_{c,\sF}$ and $\hat{\sigma}^-_{c,\sF}$, defined 
in (\ref{n-flux}). Note that for any interior edge $F\in\cE_\sI$, the nodal value of the
recovered flux is an average of the numerical fluxes.

For interface problems, the recovered flux in (\ref{rt-explicit}) 
and the resulting estimator are similar to those introduced  and analyzed in \cite{CaZh:09}.
To this end, let $A|_\sK = \a_\sK I$  for any $K\in\cT$,  where $\a_\sK$ and $I$
are constant and the identity matrix, respectively.
Let 
 \[
 \a_\sF^-=\a_{K^-_\sF}
\quad{and}\quad\a_\sF^+=\a_{K^+_\sF}, 
\]
then
 \[
 \beta_\sF^- = \dfrac{1}{\a_\sF^-} \left(\bpsi_\sF^{rt},\,
 \bpsi_\sF^{rt}\right)_{K_\sF^-}
 \quad\mbox{and}\quad
 \beta_\sF^+ = \dfrac{1}{\a_\sF^+}
 \left(\bpsi_\sF^{rt}, \,\bpsi_\sF^{rt}\right)_{K_\sF^+}.
 \]
For a regular triangulation, the ratio of $\left(\bpsi_\sF^{rt},\,
\bpsi_\sF^{rt}\right)_{K_\sF^-} $ and $\left(\bpsi_\sF^{rt},\,
\bpsi_\sF^{rt}\right)_{K_\sF^+} $ are bounded above and below. Thus
 \beq\label{c-equi}
 a_{rt,\sF}=\dfrac{\beta_{\sF}^-}{\beta_{\sF}^- + \beta_{\sF}^+}
 \approx  \dfrac{\a_\sF^+}{\a_\sF^- + \a_\sF^+}
 \quad \mbox{and}\quad
 1-a_{rt,\sF}=\dfrac{\beta_{\sF}^+}{\beta_{\sF}^- + \beta_{\sF}^+} \approx
 \dfrac{\a_\sF^-}{\a_\sF^- + \a_\sF^+}.
 \eeq
(Here and thereafter, we will use $x \approx y$ to mean that there
exist two positive constants $C_1$ and $C_2$ independent of the mesh
size such that $C_1 x \leq y\leq C_2 x$.) (\ref{c-equi}) indicates
that the weights in the nodal values of the recovered flux may be
replaced by $\dfrac{\a_\sF^+}{\a_\sF^- + \a_\sF^+}$ and
$\dfrac{\a_\sF^-}{\a_\sF^- + \a_\sF^+}$, respectively. 

Next, we consider the $\BDM$ recovery.
For edge $F\in\cE$, let $\bs_\sF$ and $\be_\sF$ be endpoints of $F$ such that 
$\be_\sF-\bs_\sF =h_\sF\bt_\sF$. Let $\bphi^{bdm}_{_{s, F}}$ and $\bphi^{bdm}_{_{e, F}}$ be the two local $\BDM$ basis functions associated withe vertices $\bs_\sF$ and $\be_\sF$, respectively.
For $i=\{s,e\}$,
define the global $\BDM$ basis functions associated with
the edge $F$ by
 \[
\bpsi^{bdm}_{_{i, F}} \!=\! \left\{\!\!\begin{array}{llll}
\bphi^{bdm}_{_{i, F}}|_{K_\sF^-}, \!&\! \bx \in K_\sF^-,  \\[3mm]
-\bphi^{bdm}_{_{i,F}}|_{K_\sF^+}, \!&\! \bx \in K_\sF^+,  \\[3mm]
0, \!&\! \bx \not\in \o_\sF,
\end{array}
\right. \!\forall F\!\in\!\cE_\sI
\mbox{ and }
\bpsi^{bdm}_{_{i, F}}\! = \!\left\{\!\!\begin{array}{llll}
\bphi^{bdm}_{_{i, F}}|_{K_\sF^-}, \!&\! \bx \in K_\sF^-,  \\[4mm]
0, \!&\! \bx \not\in \o_\sF,
\end{array}
\right.\! \forall F\!\in \!\cE_\sD\!\!\cup\cE_\sN.
 \]
Denote by
$\bpsi^{bdm,-}_{_{i, F}}$ and $\bpsi^{bdm,+}_{_{i, F}}$
the restriction of $\bpsi^{bdm}_{_{i, F}}$ on $K_\sF^-$ and $K_\sF^+$,
respectively.

Again, since 
$\st_{c,bdm,\sF}=\sd_{c,bdm,\sF} -  \bsigma^c_{j,\sF}\in \BDM^c_{_{F}}\subset H_0(\divvr;\o_\sF)$, we have
 \[
 \st_{c,bdm,\sF}
  = a_{bdm,\sF} j^c_{f,\sF} \bpsi_{s,\sF}^{bdm} + b_{bdm,\sF} j^c_{f,\sF} \bpsi_{e,\sF}^{bdm}
 \quad \forall\,\, F\in \cE_\sI.
 \]
 For $i,\, j \in\{ s,\,e\}$, let
  \[
  \beta_{ij,\sF}^\pm =  \left(A^{-1} \bpsi_{i,\sF}^{bdm},\, \bpsi_{j,\sF}^{bdm}\right)_{K_\sF^\pm}
  \quad\mbox{and}\quad \beta_{ij,\sF}  = \beta_{ij,\sF}^- +\beta_{ij,\sF}^+ .
  \]
Solving (\ref{c-var}) yields
 \beq\label{BDM-a}
 a_{bdm,\sF}= \dfrac{ (\beta_{ss,\sF}^- + \beta_{se,\sF}^-)\beta_{ee,\sF}
 -(\beta_{se,\sF}^- +\beta_{ee,\sF}^- )\beta_{se,\sF}  }
 {\beta_{ss,\sF}\beta_{ee,\sF}- \beta_{se,\sF}^2}
 \eeq
 and
 \beq
 \label{BDM-b}
 b_{bdm,\sF}= \dfrac{ (\beta_{ss,\sF}^- + \beta_{se,\sF}^-)\beta_{ss,\sF}
 -(\beta_{se,\sF}^- +\beta_{ee,\sF}^- )\beta_{se,\sF}  }
 {\beta_{ss,\sF}\beta_{ee,\sF}- \beta_{se,\sF}^2}
 \eeq
Since $\bpsi_{\sF}^{rt,\pm}=\bpsi_{s,\sF}^{bdm,\pm}+\bpsi_{e,\sF}^{bdm,\pm}$, we have
  \beq\label{bdm-I}
  \sd_{c,bdm,\sF}=\st_{c,bdm,\sF}+\bsigma^c_{j,\sF}
  =\hat{a}_{bdm,\sF} j^c_{f,\sF}  \bpsi_{s,\sF}^{bdm}
 + \hat{b}_{bdm,\sF} j^c_{f,\sF}  \bpsi_{e,\sF}^{bdm} 
 \quad \forall\,\, F\in \cE_\sI
  \eeq
with
 \[
\hat{a}_{bdm,\sF} = \left\{ \begin{array}{llll}
 1- a_{bdm,\sF} ,  & \mbox{on}&K_{\sF}^-, \\[2mm]
a_{bdm,\sF}, & \mbox{on} & K_{\sF}^+,
 \end{array}
 \right.
 \quad\mbox{and}\quad
 \hat{b}_{bdm,\sF} = \left\{ \begin{array}{llll}
 1- b_{bdm,\sF} ,  & \mbox{on}&K_{\sF}^-, \\[2mm]
b_{bdm,\sF}, & \mbox{on} & K_{\sF}^+.
 \end{array}
 \right.
\]
Thus, by (\ref{app-err-f}) and (\ref{rt-N}) the error flux is given by
 \beq\label{app-err-f-bdm}
 \sd_{c,bdm}
 =\sum_{F\in\cE_\sI}  \sd_{c,bdm,\sF}
 -\sum_{F\in\cE_\sN}  j^c_{f,\sF}  (\bpsi_{s,\sF}^{bdm,-} +  \bpsi_{e,\sF}^{bdm,-}).
 \eeq
 with $\sd_{c,bdm,\sF}$ defined in (\ref{bdm-I}).
Now, by (\ref{app-f}) and (\ref{n-flux-rep}), the explicit formula for the recovered flux
using the $\BDM$ element is then
 \beq\label{bdm-explicit}
 \bsigma_{c}^{bdm} = \sd_{c,bdm}+\hat{\bsigma}_c =
 \sum_{F\in\cE} \sigma_{c,s,\sF}^{bdm} \bpsi_{s,\sF}^{bdm}
+ \sum_{F\in\cE} \sigma_{c,e,\sF}^{bdm} \bpsi_{e,\sF}^{bdm},
 \eeq
where
\[
 \sigma_{c,s,\sF}^{bdm} =
 \left\{
 \begin{array}{lllll}
 a_{bdm,\sF} \hat{\sigma}^-_{c,\sF}
 +(1-a_{bdm,\sF}) \hat{\sigma}^+_{c,\sF},
  & F\in\cE_\sI,\\[2mm]
  g_\sN,  &F\in \cE_\sN,\\[2mm]
   \hat{\sigma}^-_{c,\sF},  & F\in\cE_\sD.
 \end{array}
 \right.
 \]
 and
 \[
  \sigma_{c,e,\sF}^{bdm} =
 \left\{
 \begin{array}{lllll}
 b_{bdm,\sF} \hat{\sigma}^-_{c,\sF}
 +(1-b_{bdm,\sF}) \hat{\sigma}^+_{c,\sF},
  & F\in\cE_\sI,\\[2mm]
  g_\sN,  &F\in \cE_\sN,\\[2mm]
   \hat{\sigma}^-_{c,\sF},  & F\in\cE_\sD.
 \end{array}
 \right.
 \]
Note that, for any interior edge $F\in\cE_\sI$, the coefficients of the recovered flux are again weighted averages of the numerical fluxes.

For interface problems $A|_\sK = \alpha_\sK I$ with a regular triangulation, 
by a careful calculation, we can show that
 \beq\label{c-equi-bdm}
 a _{bdm,\sF} \approx  \dfrac{\a_\sF^+}{\a_\sF^- + \a_\sF^+}
 \approx b _{bdm,\sF}
 \quad \mbox{and}\quad
 1-a_{bdm,\sF} \approx
 \dfrac{\a_\sF^-}{\a_\sF^- + \a_\sF^+}
 \approx 1-b_{bdm,\sF}.
 \eeq

\subsection{Explicit Gradient Recovery for Mixed Method}

This section introduces an explicit gradient recovery based on the mixed 
finite element approximation in (\ref{problem_mixed}). Since derivation
is similar to that in the previous section, we briefly describe the recovery 
procedure and present an explicit 
formula of the recovered gradient.

Let $(\bsigma_m, u_m)$ be the solution of (\ref{problem_mixed}). Denote
by 
 \[
 \hat{\brho}_m= -A^{-1}\bsigma_m
 \quad\mbox{and}\quad
 \bE_m=\bsigma-\bsigma_m=-A\left(\grad u -\hat{\brho}_m\right)
 \] 
the numerical gradient and the flux error, respectively.
Then the gradient error is given by
 \beq\label{g-error}
 \grad u -\hat{\brho}_m=-A^{-1}\bE_m.
 \eeq
 Denote the tangential components of the numerical gradient on edge $F\in\cE$ by
  \beq\label{g-t}
  \hat{\rho}^+_{m, \sF}=\left(\hat{\brho}_m|_{K^+_\sF} \cdot\bt_\sF\right)|_{\sF}
  \quad\mbox{and}\quad
  \hat{\rho}^-_{m, \sF}=\left(\hat{\brho}_m|_{K^-_\sF} \cdot\bt_\sF\right)|_{\sF}
  \eeq
 and the edge jump of the numerical gradient on edge $F\in \cE_\sI\cup\cE_\sD$ by
\[
j^m_{g,\sF} \equiv \jump{ \hat{\brho}_m\cdot \bt_\sF}_\sF
=\left\{
\begin{array}{lll}
 \hat{\rho}^-_{m, \sF}-\hat{\rho}^+_{m, \sF}, & \forall \,\,F\in\cE_\sI, \\[2mm]
\hat{\rho}^-_{m, \sF}-\grad g_\sD\!\!\cdot\bt_\sF , &\forall \,\,F\in\cE_\sD.
\end{array}
\right.
\]
By the continuity of the tangential components of the true gradient, 
the edge jump of the tangential component of the local error gradient is given by
\beq\label{g-jump}
 \jump{-\phi_\sF^{nc} A^{-1} \bE_m \cdot \bt_\sF}_\sF
=-j^m_{g,\sF}.
\eeq

Since $\hat{\rho}^+_{m, \sF}$ and $\hat{\rho}^-_{m, \sF}$ are affine functions 
defined on $F\in\cE$, 
the $\ND$ element is needed for their approximations. To this end, as in the $\BDM$ case, 
for edge $F\in\cE$, let $\bs_\sF$ and $\be_\sF$ be endpoints of $F$ such that 
$\be_\sF-\bs_\sF =h_\sF\bt_\sF$,
let $\bphi^{nd}_{i, \sF}$ 
($i=s,\,e$) be the local $\ND$ basis functions given in (\ref{nd}) in Appendix A, and
define the global $\ND$ basis functions associated with edge $F$ by
 \[
 \bpsi^{nd}_{i, \sF} =\! \left\{\!\!\begin{array}{llll}
\bphi^{nd}_{i, \sF}|_{K_\sF^-},& \bx \in K_\sF^-,  \\[4mm]
-\bphi^{nd}_{i,\sF}|_{K_\sF^+}, & \bx \in K_\sF^+,  \\[4mm]
0,& \bx \not\in \o_\sF,
\end{array}
\right. \forall\,F\!\in\cE_\sI
\mbox{ and }
\bpsi^{nd}_{i, \sF} =\! \left\{\!\!\begin{array}{llll}
\bphi^{nd}_{i, \sF}|_{K_\sF^-},& \bx \in K_\sF^-,  \\[4mm]
0,& \bx \not\in \o_\sF,
\end{array}
\right. \forall\,F\! \in\cE_\sD\!\!\cup\cE_\sN.
 \]
Denote by $\bpsi^{nd,-}_{i, \sF}$ and $\bpsi^{nd,+}_{i, \sF}$ the restriction of
$\bpsi^{nd}_{i, \sF}$ on $K_\sF^-$ and $K_\sF^+$, respectively.

Denote  the tangential components of the numerical gradient, $\hat{\rho}^+_{m, \sF}$ and $\hat{\rho}^-_{m, \sF}$, at the endpoints by
 \[
 d^\pm_{s,\sF} = \hat{\rho}^\pm_{m,\sF}(\bs_\sF)    \quad\mbox{and}\quad
 d^\pm_{e,\sF} = \hat{\rho}^\pm_{m,\sF}(\be_\sF),
 \]
respectively. 
Then the numerical gradient has the following representation in local $\ND$ bases
$$
\hat{\brho}_m = 
 d^-_{s,\sF}\bpsi_{s,\sF}^{nd,-} -  d^-_{e, \sF}  \bpsi_{e,\sF}^{nd,-},\,\,  \mbox{   on  } \,\, K_\sF^-,
\quad\mbox{for}\quad F\in \cE_\sD
$$
and 
$$
\hat{\brho}_m = \left\{
\begin{array}{llll}
d^-_{s,\sF} \bpsi_{s,\sF}^{nd,-} - d^-_{e,\sF} \bpsi_{e,\sF}^{nd,-}, & \mbox{on} & K_\sF^-, \\[4mm]
d^+_{s,\sF} \bpsi_{s,\sF}^{nd,+} - d^+_{e,\sF} \bpsi_{e,\sF}^{nd,+}, & \mbox{on} & K_\sF^+,
\end{array}
\right.
\quad\mbox{for}\quad F\in \cE_\sI.
$$

Let 
 \[
 c_{s,\sF} =\left\{\begin{array}{ll}
 d^-_{s,\sF} - d^+_{s,\sF}, &  \forall \,\, F\in\cE_\sI,\\[4mm]
 d^-_{s,\sF} , &  \forall \,\, F\in\cE_\sD
 \end{array}\right.
   \quad \mbox{and }\,\,
 c_{e,\sF} = \left\{\begin{array}{ll}
   d^-_{e,\sF} - d^+_{e,\sF}, & \forall \,\, F\in\cE_\sI, \\[4mm]
   d^-_{e,\sF}, & \forall \,\, F\in\cE_\sD.
   \end{array}\right.
   \]
A simple calculation leads to 
\beq\label{jump-g}
j^m_{g,\sF} =
\left\{
\begin{array}{lll}
 c_{s,\sF} (\bpsi^{nd}_{s,\sF}\cdot\bt_\sF) - c_{e,\sF} (\bpsi^{nd}_{e,\sF}\cdot\bt_\sF), 
 & \forall \,\,F\in\cE_\sI, \\[4mm]
 c_{s,\sF} (\bpsi^{nd}_{s,\sF}\cdot\bt_\sF) - c_{e,\sF} (\bpsi^{nd}_{e,\sF}\cdot\bt_\sF)
-\grad g_\sD\!\!\cdot\bt_\sF , &\forall \,\,F\in\cE_\sD.
\end{array}
\right.
\eeq
Let 
 \beq\label{rhojD}
 \brho^m_{j,\sF}
    = - c_{s,\sF} \bpsi_{s,\sF}^{nd,-} + c_{e,\sF} \bpsi_{e,\sF}^{nd,-}, \quad\mbox{on }\,\, K_\sF^-
    \eeq
for $F\in\cE_\sD$, and let 
 \beq \label{rhojI}
\brho^m_{j,\sF} = \left\{
 \begin{array}{lll}
- c_{s,\sF} \bpsi_{s,\sF}^{nd,-} + c_{e,\sF} \bpsi_{e,\sF}^{nd,-},
& \mbox{on} & K_\sF^-, \\[4mm]
0,  & \mbox{on} & K_\sF^+
\end{array}
\right.
\eeq
for $F\in\cE_\sI$.
By the properties of the $\ND$ basis functions in (\ref{propnd0}) and  (\ref{propnd}),
it is easy to check that 
 \[
 \jump{\brho^m_{j,\sF}\cdot\bt_\sF}_\sF=-j^m_{g,\sF}
 \quad\mbox{and}\quad
 \brho^m_{j,\sF} \cdot\bt_\sE=0\quad\mbox{on}\,\,E\in\cE_{b,\sF}
 \]
 for $F\in \cE_\sI\cup\cE_\sD$. 
 
 Let 
 \[
 H_0(\mbox{curl};\o_\sF)=\{ \btau\in H(\mbox{curl};\o_\sF)\, |\,\btau\cdot\bt |_{\partial \o_\sF} = 0\},
 \]
 and let
 \[
  \ND^m_{\sF}
 = \{ \btau\in H_0(\mbox{curl};\o_\sF) \,|\, \btau|_\sK\in  \ND(K)\,\,\forall\,\, K\in \cT_\sF\}.
 \]
 In a similar fashion as that of the previous section,
  by (\ref{g-jump}), 
 we introduce the following approximation to the error gradient:
   \beq\label{app-err-g}
  \rd_{m,nd}=\sum_{F\in\cE_\sD} \rd_{m,nd,\sF}+\sum_{F\in\cE_\sI} \rd_{m,nd,\sF},
  \eeq
 where 
  \beq\label{app-err-g-F}
  \rd_{m,nd,\sF}=\left\{\begin{array}{ll}
  \brho^m_{j,\sF}, & F\in \cE_\sD,\\[2mm]
  \tilde{\brho}_{m,\sF} + \brho^m_{j,\sF}, & F\in \cE_\sI.
  \end{array}\right.
  \eeq
Here, $\tilde{\brho}_{m,\sF}\in \ND^m_{\sF}$ is the solution of the following minimization problem:
 \beq\label{mix-mini}
 \|A^{1/2}\left(\tilde{\brho}_{m,\sF}+\brho^m_{j,\sF}\right)\|_{0,\o_\sF}
 =\min_{\btau\in \ND^m_{\sF}} \|A^{1/2}\left(\btau+\brho^m_{j,\sF}\right)\|_{0,\o_\sF}.
 \eeq

Let
$$
\gamma_{ij,\sF}^- =  (A \bpsi_{i,\sF}^{nd},  \bpsi_{j,\sF}^{nd})_{K_\sF^-},\quad
\gamma_{ij,\sF}^+ =  (A \bpsi_{i,\sF}^{nd},  \bpsi_{j,\sF}^{nd})_{K_\sF^+},
\quad\mbox{and}\quad \gamma_{ij,\sF}  = \gamma_{ij,\sF}^- +\gamma_{ij,\sF}^+ 
$$
for $i,\, j \in\{ s,\,e\}$.
Solving (\ref{mix-mini}) leads to
$$
\tilde{\brho}_{m,\sF} = \rho_{s,\sF} \bpsi_{s,\sF}^{nd} + \rho_{e,\sF} \bpsi_{e,\sF}^{nd}
$$
with coefficients given by
\begin{eqnarray*}
\rho_{s,\sF} &=&\dfrac{\left(c_{s,\sF}\gamma_{ss,\sF}^- 
 -c_{e,\sF}\gamma_{se,\sF}^- \right)\gamma_{ee,\sF}
-
\left(c_{s,\sF}\gamma_{se,\sF}^- 
 -c_{e,\sF}\gamma_{ee,\sF}^- \right)\gamma_{se,\sF}
 }{\gamma_{ss,\sF}\gamma_{ee,\sF}- \gamma_{se,\sF}^2}\\
 &=&
 \dfrac{c_{s,\sF} \left(\gamma_{ss,\sF}^-\gamma_{ee,\sF} 
 - \gamma_{se,\sF}^- \gamma_{se,\sF} \right)
- c_{e,\sF}
\left(\gamma_{se,\sF}^- \gamma_{ee,\sF}
 -\gamma_{ee,\sF}^- \gamma_{se,\sF}\right)
 }{\gamma_{ss,\sF}\gamma_{ee,\sF}- \gamma_{se,\sF}^2}
 \end{eqnarray*}
and
\begin{eqnarray*}
\rho_{e,\sF} &=&\dfrac{\left(c_{s,\sF}\gamma_{se,\sF}^- 
 -c_{e,\sF}\gamma_{ee,\sF}^- \right)\gamma_{ss,\sF}
-
\left(c_{s,\sF}\gamma_{ss,\sF}^- 
 -c_{e,\sF}\gamma_{se,\sF}^- \right)\gamma_{se,\sF}
 }{\gamma_{ss,\sF}\gamma_{ee,\sF}- \gamma_{se,\sF}^2}\\
 &=&
 \dfrac{c_{s,\sF} \left(\gamma_{se,\sF}^-\gamma_{ss,\sF} 
 - \gamma_{ss,\sF}^- \gamma_{se,\sF} \right)
- c_{e,\sF}
\left(\gamma_{ee,\sF}^- \gamma_{ss,\sF}
 -\gamma_{se,\sF}^- \gamma_{se,\sF}\right)
 }{\gamma_{ss,\sF}\gamma_{ee,\sF}- \gamma_{se,\sF}^2}.
 \end{eqnarray*}
Hence, we have
 \beq\label{app-err-g-I}
\rd_{m,nd,\sF}=\tilde{\brho}_{m,\sF}+\brho^m_{j,\sF} = \left\{
\begin{array}{lllll}
(\rho_{s,\sF}  - c_{s,\sF})  \bpsi_{s,\sF}^{nd}
+ (\rho_{e,\sF} +c_{e,\sF})  \bpsi_{e,\sF}^{nd},&\mbox{on}& K_\sF^- ,\\[4mm]
\rho_{s,\sF}  \bpsi_{s,\sF}^{nd}
+ \rho_{e,\sF}  \bpsi_{e,\sF}^{nd},&\mbox{on}& K_\sF^+
\end{array}\right.
\eeq
for interior edge $F\in\cE_\sI$.
Now, the explicit formula for the recovered gradient using $\ND$ element is then
\beq
\brho^{nd}_m  =\rd_{m,nd}+\hat{\brho}_m = \sum_{F\in \cE} a^{nd}_{m,\sF}\bpsi^{nd}_{s,\sF}+
 \sum_{F\in \cE} b^{nd}_{m,\sF}\bpsi^{nd}_{e,\sF},
\eeq
where the coefficients are given by
$$
a^{nd}_{m,\sF} = \left\{\begin{array}{lllll}
\rho_{s,\sF} +d_{s,\sF}^+, & F\in \cE_\sI, \\[2mm]
d^-_{s,\sF},& F\in\cE_{\sN},\\[2mm]
\grad g_{\sD}\!\!\cdot\bt_\sF,& F\in\cE_{\sD}
\end{array}
\right.
\quad
\mbox{and}
\quad
b^{nd}_{m,\sF} = \left\{\begin{array}{lllll}
\rho_{e,\sF} -d_{e,\sF}^+, & F\in \cE_\sI, \\[2mm]
-d^-_{e,\sF},& F\in\cE_{\sN},\\[2mm]
-\grad g_{\sD}\!\!\cdot\bt_\sF,& F\in\cE_{\sD}.
\end{array}
\right.
$$
Notice that
$$
\rho_{s,\sF} +d_{s,\sF}^+ = \ell_{s,\sF} d^-_{s,\sF} + (1-\ell_{s,\sF})d^+_{s,\sF} 
-
\dfrac{ c_{e,\sF}
\left(\gamma_{se,\sF}^- \gamma^+_{ee,\sF}
 -\gamma_{ee,\sF}^- \gamma^+_{se,\sF}\right)
 }{\gamma_{ss,\sF}\gamma_{ee,\sF}- \gamma_{se,\sF}^2}
$$
with
$$
 \ell_{s,\sF}  =  \dfrac{ \left(\gamma_{ss,\sF}^-\gamma_{ee,\sF} 
 - \gamma_{se,\sF}^- \gamma_{se,\sF} \right) }
 {\gamma_{ss,\sF}\gamma_{ee,\sF}- \gamma_{se,\sF}^2},
$$
and
$$
\rho_{e,\sF} -d_{e,\sF}^+ = - \ell_{e,\sF} d^-_{e,\sF} - (1-\ell_{e,\sF})d^+_{e,\sF} 
+
 \dfrac{c_{s,\sF} \left(\gamma_{se,\sF}^-\gamma^-_{ss,\sF} 
 - \gamma_{ss,\sF}^- \gamma^+_{se,\sF} \right)
 }{\gamma_{ss,\sF}\gamma_{ee,\sF}- \gamma_{se,\sF}^2}.
 $$
with
$$
 \ell_{e,\sF}  =  \dfrac{ \left(\gamma_{ee,\sF}^-\gamma_{ss,\sF} 
 - \gamma_{se,\sF}^- \gamma_{se,\sF} \right) }
 {\gamma_{ss,\sF}\gamma_{ee,\sF}- \gamma_{se,\sF}^2}.
$$

Note that, for any interior edge $F\in\cE_\sI$, the coefficients of the recovered gradient are 
weighted averages of the numerical gradients plus some high order terms.

For interface problems $A|_\sK = \alpha_\sK I$ with a regular triangulation, 
by a careful calculation, we can show that
 \beq\label{c-equi-bdm}
 \ell _{s,\sF}
 \approx  \dfrac{\a_\sF^-}{\a_\sF^- + \a_\sF^+}
 \approx \ell _{e,\sF}
 \quad \mbox{and}\quad
 1-\ell_{s,\sF} \approx
 \dfrac{\a_\sF^+}{\a_\sF^- + \a_\sF^+}
 \approx 1-\ell_{e,\sF}.
 \eeq


\subsection{Explicit Flux and Gradient Recoveries for Nonconforming Method}

Let $u_{nc}$ be the solution of (\ref{problem_nc}). Denote by 
 \[
  \hat{\brho}_{nc}= \grad_h u_{nc}
 \quad\mbox{and}\quad
\hat{\bsigma}_{nc}= -A^{-1}\grad_h u_{nc}=-A^{-1} \hat{\brho}_{nc}
 \]
 the numerical gradient and the numerical flux, respectively.
 This section introduces explicit formulas of the recovered flux $\bsigma_{nc}\in H(\divvr, \O)$ 
 and the recovered gradient $\brho_{nc}\in H(\curll,\O)$  based on $\hat{\bsigma}_{nc}$
 and $ \hat{\brho}_{nc}$.
 Again, derivations are similar to those in the previous sections and, hence, descriptions in this section
 are brief. 
 
Denote the solution error, the flux error, and the gradient error by 
 \[
 e_{nc} = u-u_{nc},
  \quad 
 \bE_{nc} = \bsigma-\hat{\bsigma}_{nc} = -A\grad_h e_{nc},
  \mbox{ and }
  \grad u -\grad_h u_{nc}=\grad_h e_{nc}= -A^{-1}\bE_{nc},
  \]
respectively. Denote the normal components of the numerical flux on edge $F\in\cE$ by
  \beq\label{n-flux-nc}
 \hat{\sigma}^+_{nc, \sF}=\left(\hat{\bsigma}_{nc}|_{K^+_\sF} \cdot\bn_\sF\right)|_{\sF}
  \quad\mbox{and}\quad
 \hat{\sigma}^-_{nc, \sF}=\left(\hat{\bsigma}_{nc}|_{K^-_\sF} \cdot\bn_\sF\right)|_{\sF}
  \eeq
and the edge jump of the numerical flux by
 \[
 j^{nc}_{f,\sF} \equiv \jump{\hat{\bsigma}_{nc}\cdot\bn_\sF}_\sF
=\left\{
\begin{array}{lll}
 \hat{\sigma}^-_{nc, \sF}-\hat{\sigma}^+_{nc, \sF}, & \forall \,\,F\in\cE_\sI, \\[2mm]
\hat{\sigma}^-_{nc, \sF}-g_\sN , &\forall \,\,F\in\cE_\sN.
\end{array}
\right.
\]
Denote the tangential components of the numerical gradient on edge $F\in\cE$ by
  \beq\label{g-t-nc}
  \hat{\rho}^+_{nc, \sF}=\left(\hat{\brho}_{nc}|_{K^+_\sF}\cdot\bt_\sF\right)|_{\sF}
  \quad\mbox{and}\quad
  \hat{\rho}^-_{nc, \sF}=\left(\hat{\brho}_{nc}|_{K^-_\sF}\cdot\bt_\sF\right)|_{\sF}
  \eeq
and the edge jump of the numerical gradient by
 \[
j^{nc}_{g,\sF} \equiv \jump{\hat{\brho}_{nc}\cdot\bt_\sF}_\sF
=\left\{
\begin{array}{lll}
 \hat{\rho}^-_{nc, \sF}-\hat{\rho}^+_{nc, \sF}, & \forall \,\,F\in\cE_\sI, \\[2mm]
 \hat{\rho}^-_{nc, \sF}-\grad g_\sD\cdot\bt_\sF, &\forall \,\,F\in\cE_\sD.
\end{array}
\right.
\]
By the continuity of the true flux and true gradient, we have
\beq\label{jump-nc}
\jump{\phi_\sF^{nc} \bE_{nc} \cdot \bn_\sF}_\sF
=-j^{nc}_{f,\sF}
\,\,\mbox{ and }\,\,
\jump{-\phi_\sF^{nc} A^{-1} \bE_{nc} \cdot \bt_\sF}_\sF
=-j^{nc}_{g,\sF}.
\eeq

\subsubsection{ Explicit Formula for Flux Recovery}

In a similar fashion as in Section~4.1, the approximation to the error flux using the $RT$ element is given by
\beq\label{sdncrtf}
\sd_{nc,rt} = \sum_{F\in\cE_\sI}\sd_{nc,rt,\sF}
+ \sum_{F\in\cE_\sN}\sd_{nc,rt,\sF}
= \sum_{F\in\cE_\sI}\sd_{nc,rt,\sF}
-\sum_{F\in\cE_\sN}j^{nc}_{f,\sF}  \bpsi_\sF^{rt,-}
\eeq
with 
\beq\label{app-err-f-nc-F}
\sd_{nc,rt,\sF} 
= \left\{
 \begin{array}{lll}
 - \left(1-a_{rt,\sF}\right) j^{nc}_{f,\sF} \bpsi_{\sF}^{rt,-},
    & \mbox{on} & K_\sF^-, \\[3mm]
 a_{rt,\sF}
    j^{nc}_{f,\sF} \bpsi_{\sF}^{rt,+},  & \mbox{on} & K_\sF^+,
\end{array}
\right.
\eeq
where $a_{rt,\sF}$ is defined in Section 4.1.2.
Now, the explicit flux recovery using the
$RT$ element is given by 
\beq\label{rt-explicit-nc}
 \bsigma_{nc}^{rt} = \sd_{nc,rt}+\hat{\bsigma}_c 
 = \sum_{F\in\cE}
 \sigma_{_{nc,F}}^{rt}  \bpsi_\sF^{rt} \in H(\mbox{div},\O),
 \eeq
where the nodal value $\sigma_{_{nc,F}}^{rt}$ is
given by
 \beq\label{rt-coef-nc}
 \sigma_{_{nc,F}}^{rt} = \left\{\begin{array}{llll}
 a_{rt,\sF} \hat{\sigma}^-_{nc,\sF}
 +(1-a_{rt,\sF})\hat{\sigma}^+_{nc,\sF},
  & F\in\cE_\sI, \\[2mm]
   g_\sN, & F\in\cE_\sN,\\[2mm]
\hat{\sigma}^-_{nc,\sF},  & F\in\cE_\sD.
 \end{array}
 \right.
 \eeq

Using the $\BDM$ element, the approximation to the error flux is given by
 \begin{eqnarray}\nonumber 
 \sd_{nc,bdm}
 &=& \sum_{F\in\cE_\sI}\sd_{nc,bdm,\sF}
+ \sum_{F\in\cE_\sN}\sd_{nc,bdm,\sF}\\[2mm]\label{bdm-Ib}
 &=&\sum_{F\in\cE_\sI} j^{nc}_{f,\sF}\left(a_{bdm,\sF}   \bpsi_{s,\sF}^{bdm}
 + b_{bdm,\sF}   \bpsi_{e,\sF}^{bdm}\right)
 -\sum_{F\in\cE_\sN}  j^{nc}_{f,\sF}  (\bpsi_{s,\sF}^{bdm,-}+\bpsi_{e,\sF}^{bdm,-}),
 \end{eqnarray}
where $a_{bdm,\sF}$ and $b_{bdm,\sF}$ are defined in Section 4.1.2.
Now, the explicit flux recovery using the $\BDM$ element is given by 
 \beq\label{bdm-explicit-nc}
 \bsigma_{nc}^{bdm}  =
 \sum_{F\in\cE} \sigma_{nc,s,\sF}^{bdm} \bpsi_{s,\sF}^{bdm}
 +\sum_{F\in\cE} \sigma_{nc,e,\sF}^{bdm}  \bpsi_{e,\sF}^{bdm} \in H(\mbox{div},\O)
 \eeq
 where  where $\sigma_{nc,s,\sF}^{bdm}$ and $\sigma_{nc,e,\sF}^{bdm}$ is similar to $\sigma_{c,s,\sF}^{bdm}$
 and $\sigma_{c,e,\sF}^{bdm}$ defined in Section 4.1.2, i.e.,
\[
 \sigma_{nc,s,\sF}^{bdm} =
 \left\{
 \begin{array}{lllll}
 a_{bdm,\sF} \hat{\sigma}^-_{nc,\sF}
 +(1-a_{bdm,\sF}) \hat{\sigma}^+_{nc,\sF},
  & F\in\cE_\sI,\\[2mm]
  g_\sN,  &F\in \cE_\sN,\\[2mm]
   \hat{\sigma}^-_{nc,\sF},  & F\in\cE_\sD.
 \end{array}
 \right.
 \]
 and
 \[
  \sigma_{nc,e,\sF}^{bdm} =
 \left\{
 \begin{array}{lllll}
 b_{bdm,\sF} \hat{\sigma}^-_{nc,\sF}
 +(1-b_{bdm,\sF}) \hat{\sigma}^+_{nc,\sF},
  & F\in\cE_\sI,\\[2mm]
  g_\sN,  &F\in \cE_\sN,\\[2mm]
   \hat{\sigma}^-_{nc,\sF},  & F\in\cE_\sD.
 \end{array}
 \right.
 \]

 
\subsubsection{Explicit Formula for Gradient Recovery}

Let $\bphi^{ne}_\sF$ be the local $\NE$ basis function given in Appendix A, define 
the global $\NE$ basis function associated with the edge $F$ by
\[
\bpsi^{ne}_{F} =\! \left\{\!\!\begin{array}{llll}
\bphi^{ne}_\sF|_{K_\sF^-},& \bx \in K_\sF^-,  \\[2mm]
-\bphi^{ne}_\sF|_{K_\sF^+}, & \bx \in K_\sF^+,  \\[2mm]
0,& \bx \not\in \o_\sF,
\end{array}
\right. \forall F\in\cE_\sI
\mbox{ and }
\bpsi^{ne}_{F} =\! \left\{\!\!\begin{array}{llll}
\bphi^{ne}_\sF|_{K_\sF^-},& \bx \in K_\sF^-,  \\[2mm]
0,& \bx \not\in \o_\sF,
\end{array}
\right.
\forall F\in\cE_\sD\cup\cE_\sN.
\]
Denote by $\bpsi^{ne,-}_{F}$ and $\bpsi^{ne,+}_{F}$ the restriction of
 $\bpsi^{ne}_{F}$ on $K_\sF^-$ and $K_\sF^+$, respectively.
 Let
 \[
 a_{ne,\sF}
 = \dfrac{ \beta_{ne,\sF}^- } { \beta_{ne,\sF}^- +\beta_{ne,\sF}^+  }
 \quad\mbox{with}\quad
 \beta_{ne,\sF}^\pm =  \left(A^{-1} \bpsi_{\sF}^{ne},\,\bpsi_{\sF}^{ne}\right)_{K_\sF^\pm}.
  \]
Then the approximation to the gradient error is
\beq\label{app-err-g-nc}
\rd_{{nc},ne} =\sum_{F\in\cE_\sI}\rd_{{nc},ne,\sF}
+\sum_{F\in\cE_\sD}\rd_{{nc},ne,\sF}
= \sum_{F\in\cE_\sI}\rd_{{nc},ne,\sF}
+\sum_{F\in\cE_\sD}j^{nc}_{g,\sF}  \bpsi_\sF^{ne,-}
\eeq
with 
\beq\label{local-rho-nc}
\rd_{{nc},ne,\sF} 
= \left\{
 \begin{array}{lll}
  -\left(1-a_{ne,\sF}\right)
  j^{nc}_{g,\sF} \bpsi_{\sF}^{ne,-}, & \mbox{on} & K_\sF^-, \\[3mm]
 a_{ne,\sF}
     j^{nc}_{g,\sF} \bpsi_{\sF}^{ne,+},  & \mbox{on} & K_\sF^+.
\end{array}
\right.
\eeq
Now, the explicit gradient recovery using the $\NE$ element is given by 
\beq\label{ne-explicit-nc}
\brho_{nc,\sF}^{ne} = \rd_{nc,ne} +\hat{\brho}_{nc}
 = \sum_{F\in\cE} \rho_{nc,\sF}^{ne}  \bpsi_\sF^{ne} \in H(\mbox{curl},\O),
\eeq
where the nodal value $\rho_{{nc,F}}^{ne}$ is
given by
  \beq\label{ne-coef-nc}
 \rho_{nc,\sF}^{ne} 
 = \left\{ \begin{array}{lll}
 a_{ne,\sF}\hat{\rho}^-_{nc,F}
 +\left(1- a_{ne,\sF}\right)\hat{\rho}^+_{nc,F},
 & F\in\cE_\sI, \\[3mm]
 \grad g_\sD\!\!\cdot \bt_\sF, & F\in\cE_\sD, \\[3mm]
 \hat{\rho}^-_{nc,F}, & F\in\cE_\sN.
\end{array}
\right.
\eeq

Next, we describe the recovered gradient using the $\ND$ element. 
Let 
\[
 a_\sF^{nc}= \dfrac{ (\gamma_{ss,\sF}^- - \gamma_{se,\sF}^-) \gamma_{ee,\sF}  -(\gamma_{se,\sF}^- -\gamma_{ee,\sF}^-) \gamma_{se,\sF}  }
{\gamma_{ss,\sF}\gamma_{ee,\sF}- \gamma_{se,\sF}^2} >0 , \quad \mbox{and}
\]
\[
b_\sF^{nc}=\dfrac{  (\gamma_{se,\sF}^- -\gamma_{ee,\sF}^-) \gamma_{ss,\sF}
-(\gamma_{ss,\sF}^- - \gamma_{se,\sF}^-) \gamma_{se,\sF} }
{\gamma_{ss,\sF}\gamma_{ee,\sF}- \gamma_{se,\sF}^2} <0
\]
with $ \gamma_{ij,\sF}^\pm$ and $ \gamma_{ij,\sF}$, ($i,j \in \{s,e\}$) defined in Section 4.2.
Similar to the gradient recovery using the $\ND$ element for the mixed method, 
the approximation to the error gradient is 
\beq\label{app-err-g-nd-nc}
\rd_{nc, nd} 
= \sum_{F\in\cE_\sI} \left(\hat{a}^{nc}_{\sF} j^{nc}_{g,\sF}  \bpsi_{s,\sF}^{nd}
 + \hat{b}_\sF^{nc} j^{nc}_{g,\sF} \bpsi_{e,\sF}^{nd}\right)
-\sum_{F\in\cE_\sD} j^{nc}_{g,\sF} (\bpsi^{nd,-}_{s,\sF}-\bpsi^{nd,-}_{e,\sF}).
\eeq
where the coefficients $\hat{a}^{nc}_{\sF}$ and $\hat{b}^{nc}_{\sF}$  are given by
$$
  \hat{a}^{nc}_{\sF} = \left\{ \begin{array}{llll}
 1-a_\sF^{nc}, & \mbox{on}&K_{\sF}^-, \\[2mm]
 -a_\sF^{nc}, & \mbox{on}& K_{\sF}^+,
\end{array}
\right.
\quad\mbox{and}\quad
  \hat{b}^{nc}_{\sF} = \left\{ \begin{array}{llll}
 1+b_\sF^{nc}, & \mbox{on}&K_{\sF}^-, \\[2mm]
 b_\sF^{nc}, & \mbox{on}& K_{\sF}^+,
\end{array}
\right.
$$
Now, the recovered gradient using the $\ND$ element is given by
\beq
\brho_{nc}^{nd} = \rd_{nc, nd} + \hat{\brho}_{nc}^{nd} 
= \sum_{F\in\cE}\rho_{nc,s,\sF}^{nd} \bpsi_{s,\sF}^{nd}
 +\sum_{F\in\cE}\rho_{nc,s,\sF}^{nd} j^{nc}_{g,\sF} \bpsi_{e,\sF}^{nd},
\eeq
where the coefficients of $\bpsi_{s,\sF}^{nd}$ and $\bpsi_{e,\sF}^{nd}$ are given by
\beq
\hat{\rho}^{nd}_{nc, s,\sF} = \left\{
\begin{array}{lllll}
 a_\sF^{nc} \hat{\rho}^{-}_{nc,\sF}+ \left(1-a_\sF^{nc}\right)
 \hat{\rho}^{+}_{nc,\sF}, & F\in \cE_\sI,\\[2mm]
\grad g_\sD \!\!\cdot\bt_\sF,& F\in \cE_\sD,\\[2mm]
j^{nc}_{g,\sF}, & F\in \cE_\sN,
\end{array}
\right.
\eeq
and
\beq
\hat{\rho}^{nd}_{nc, e,\sF} = \left\{
\begin{array}{lllll}
 b_\sF^{nc} \hat{\rho}^{-}_{nc,\sF}+ \left(1+b_\sF^{nc}\right)
 \hat{\rho}^{+}_{nc,\sF}, & F\in \cE_\sI,\\[2mm]
-\grad g_\sD \!\!\cdot\bt_\sF,& F\in \cE_\sD,\\[2mm]
-j^{nc}_{g,\sF}, & F\in \cE_\sN.
\end{array}
\right.
\eeq

\section{Explicit A Posteriori Error Estimators}\label{estimators-a}
\setcounter{equation}{0}

With the explicit recoveries of the flux and gradient introduced in Section~4 for various
finite element approximations, this section describes the corresponding recovery-based 
a posteriori error estimators.

For the conforming linear element, we study two estimators using the respective 
$RT$ and $BDM$ recoveries. The global $RT$ a posteriori error estimator is given by
 \[
 \eta^{rt}_c = \|A^{-1/2}\left(\bsigma_c^{rt} + A \grad u_c\right)\|_{0,\O} = \|A^{-1/2} \sd_{c,rt} \|_{0,\O},
 \]
and the $RT$ local 
error indicators on element $K\in\cT$ and on edge $F\in\cE$ are given by
 \[
 \eta^{rt}_{c,\sK} 
 = \|A^{-1/2} \sd_{c,rt} \|_{0,K}  
 \quad\mbox{ and }\quad 
 \eta^{rt}_{c,\sF} =  \|A^{-1/2} \sd_{c,rt,\sF} \|_{0,\o_\sF},
 \]
respectively, where $\sd_{c,rt}$ is given in (\ref{app-err-f-rt})  and $\sd_{c,rt,\sF}$  
in (\ref{rt-N}) and (\ref{rt-I}). The global $\BDM$ a posteriori error estimator is given by
 \[
\eta^{bdm}_c = \|A^{-1/2} \left(\bsigma_c^{bdm} + A\grad u_c\right)\|_{0,\O}
= \|A^{-1/2}\sd_{c,bdm} \|_{0,\O},
\]
and the $BDM$ local 
error indicators on element $K\in\cT$ and on edge $F\in\cE$ are given by
 \[
 \eta^{bdm}_{c,\sK} 
 = \|A^{-1/2} \sd_{c,bdm} \|_{0,K}
 \quad\mbox{ and }\quad
 \eta^{bdm}_{c,\sF} =  \|A^{-1/2} \sd_{c,bdm,\sF} \|_{0,\o_\sF},
 \]
 respectively, where $\sd_{c,bdm}$ is defined in (\ref{app-err-f-bdm})  and $\sd_{c,bdm,\sF}$ in 
 (\ref{rt-N}) and (\ref{bdm-I}). 

For the lowest-order mixed element, we study one estimator based on the explicit $\ND$ recovery.
The local 
error indicators on element $K\in\cT$ and on edge $F\in\cE$ are defined by
 \[
 \eta^{nd}_{m,\sK} 
 = \|A^{1/2} \rd_{m,nd} \|_{0,K}
 \quad\mbox{ and }\quad
\eta^{nd}_{m,\sF} =  \|A^{1/2} \rd_{m,nd,\sF} \|_{0,\o_\sF},
 \]
 respectively, where $\rd_{m,nd}$ is defined in (\ref{app-err-g}) and 
 $\rd_{m,nd,\sF}$ in (\ref{app-err-g-F}) and (\ref{app-err-g-I}).
 The global error estimator is then defined by
 \[
 \eta^{nd}_m = \|A^{-1/2}\left(\bsigma_m + A \brho_m^{nd}\right)\|_{0,\O} = \|A^{1/2} \rd_{m,nd} \|_{0,\O}.
 \]

For the nonconforming linear element, again we introduce two estimators based on the $RT$-$\NE$
and $\BDM$-$\ND$ recoveries. Let  $c_1,\,\,c_2\in (0,1)$ be parameters to be determined such that $c_1+c_2=1$ 
(e.g, $c_1=c_2 =1/2$). The global
$RT$-$\NE$ error estimator is defined by
 \[
 \eta^{rh}_{nc} = \left(c_1 \|A^{-1/2} \sd_{nc,rt} \|_{0,\O}^2+c_2 \|A^{1/2} \rd_{nc,ne} \|_{0,\O}^2\right)^{1/2},
 \]
 and the $RT$-$\WH$ local error indicators on element $K\in\cT$ and on edge $F\in\cE$ are defined 
  respectively by
 \begin{eqnarray*}
 \eta^{rh}_{nc,\sK}
 &=& \left(\!c_1 \|A^{-1/2} \sd_{nc,rt} \|_{0,K}^2\!
 +c_2 \|A^{1/2} \rd_{nc,ne} \|_{0,K}^2\!\right)^{1/2}\\[2mm]
\mbox{and} \quad
\eta^{rh}_{nc, \sF}
&=& \left(c_1 \|A^{-1/2} \!\sd_{nc,rt,\sF} \|_{0,\o_\sF}^2
+c_2  \|A^{1/2} \rd_{nc,ne,\sF} \|_{0,\o_\sF}^2\right)^{1/2},
 \end{eqnarray*}
where $\sd_{nc,rt}$, $\sd_{nc,rt, \sF}$, $\rd_{nc,ne}$ and $\rd_{nc,ne,\sF}$ are defined in 
(\ref{sdncrtf}), (\ref{app-err-f-nc-F}), (\ref{app-err-g-nc}), and (\ref{local-rho-nc}), respectively.

Similarly, The global $\BDM$-$\ND$ error estimator is defined by
 \[
 \eta^{rh}_{nc} = \left(c_1 \|A^{-1/2} \sd_{nc,bdm} \|_{0,\O}^2+c_2\|A^{1/2} \rd_{nc,nd} \|_{0,\O}^2\right)^{1/2},
 \]
 and the local $\BDM$-$\ND$ error indicators on element $K\in\cT$ and on edge $F\in\cE$ are defined 
  respectively by
 \begin{eqnarray*}
 \eta^{rh}_{nc,\sK}
 &=& \left(\!c_1 \|A^{-1/2} \sd_{nc,bdm} \|_{0,K}^2\!
 +c_2 \|A^{1/2} \rd_{nc,nd} \|_{0,K}^2\!\right)^{1/2}\\[2mm]
\mbox{and} \quad
\eta^{rh}_{nc, \sF}
&=& \left(c_1 \|A^{-1/2} \sd_{nc,bdm,\sF} \|_{0,\o_\sF}^2
+c_2  \|A^{1/2} \rd_{nc,nd,\sF} \|_{0,\o_\sF}^2\right)^{1/2},
 \end{eqnarray*}
where $\sd_{nc,bdm}$ and $\rd_{nc,nd}$ are defined in 
 (\ref{bdm-Ib}) and (\ref{app-err-g-nd-nc}), respectively.

\section{Efficiency and Reliability}\label{estimators-b}
\setcounter{equation}{0}

This section establishes efficiency and reliability bounds of the estimators 
defined in Section 5 for interface problems (i.e, $A = \alpha\, I$
and $\alpha(x)$ is a piecewise constant with respect to the triangulation $\cT$.).
In order to show that the reliability constant are independent of the jump of $\alpha$,
as usual, we assume that the distribution of the coefficients $\a_\sK$
for all $K\in \cT$ is locally quasi-monotone \cite{Pet:02}, which is
slightly weaker than Hypothesis 2.7 in \cite{BeVe:00}.
For convenience of readers, we restate it here.

Let $\o_z$ be the union of all elements having $z$ as a vertex.
For any $z\in\cN$, let
 \[
 \hat{\omega}_z=\{K\in\omega_z \,:\, \a_\sK = \max_{\sK'\in\omega_z} \a_{\sK'}\}.
\]
\begin{defn}\label{defnquasimonotone}
Given a vertex $z \in \cN$, the distribution of the coefficients $\a_\sK$, $K\in\omega_z$,
is said to be {\em quasi-monotone} with respect to the vertex $z$
if there exists a subset $\tilde{\o}_{\sK,z,qm}$ of $\omega_z$ such that the union
of elements in $\tilde{\o}_{\sK,z,qm}$ is a Lipschitz domain and that
\begin{itemize}
\item if $z\in\cN\backslash\cN_\sD$, then $\{K\}\cup \hat{\o}_z
\subset \tilde{\o}_{\sK,z,qm}$
and $\a_\sK\leq \a_{\sK'} \; \forall \sK' \in \tilde{\o}_{\sK,z,qm}$;
\item if $z\in\cN_\sD$, then $K\in \tilde{\o}_{\sK,z,qm}$,
$\p\tilde{\omega}_{\sK,z,qm}\cap\Gamma_D \neq \emptyset$, and
$\a_\sK\leq \a_{\sK'} \; \forall \sK' \in \tilde{\o}_{\sK,z,qm}$.
\end{itemize}
The distribution of the coefficients $\a_\sK$, $K\in\cT$, is said to be
locally {\em quasi-monotone} if it is quasi-monotone with respect to
every vertex $z\in\cN$.
\end{defn}

Let $f_{_\cT}$ be the $L^2$ projection of $f$ onto the space of piecewise constant defined on elements of $\cT$, let
 \begin{eqnarray*}
 H_f &=& \left(\sum_{K\in\cT} H_{f,K}^2\right)^{1/2}
 \quad\mbox{with}\quad
 H_{f,K} = \dfrac{h_K}{\sqrt{\alpha_K}}\, \|f-f_{_\cT}\|_{0,K}
 \quad\forall\;K\in\cT,
\end{eqnarray*}
and let
 \[
\hat{H}_f=\left(\sum_{z\in \cN\cap (\calS\cup\Gamma_D)}
 \sum_{K\subset\omega_z} \dfrac{h^2_K}{\a_K}\,\|f\|^2_{0,K}
 +\sum_{z\in \cN\setminus (\calS\cup\Gamma_D)}
 \sum_{K\subset\omega_z} \dfrac{h^2_K}{\a_K}\,
 \|f-\dashint_{\omega_z}f\,dx\|^2_{0,K}\right)^{1/2},
 \]
where $\dashint_{\omega_z}f\,dx=\int_{\hat{\o}_z} f \psi_z\,dx\big/
\int_{\hat{\o}_z} \psi_z\,dx$ is a weighted average of $f$ over $\hat{\o}_z$
and $\psi_z$ is a linear nodal basis function at $z\in\cN$.


\begin{rem}  For various lower order finite element approximations,
the second term in $\hat{H}_f$ is of higher order
than $\eta_{_\cE}$ {\em (}defined below in {\em (\ref{edgeestimator-c}))}
  for $f\in L^2(\O)$ and so is the first term for $f\in L^p(\O)$
with $p > 2$ {\em (}see {\em \cite{CaVe:99}}{\em )}.
\end{rem}

\subsection{Conforming Elements}

\begin{thm}
Assume that the distribution of the coefficients are quasi-monotone. Then
the error estimators $\eta^{rt}_c$ and $\eta^{bdm}_c$ satisfy the 
global reliability bound:
\beq\label{rel-rt}
\|\alpha^{1/2} \grad e_c\|_{0,\O} \leq C(\eta^{rt}_c + \hat{H}_f )
\quad\mbox{and}\quad
\|\alpha^{1/2} \grad e_c\|_{0,\O} \leq C(\eta^{bdm}_c + \hat{H}_f),
\eeq
where the constants above are independent of $\alpha$ and the mesh size.
\end{thm}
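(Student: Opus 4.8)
The plan is to prove both inequalities at once, relying only on the two structural facts established in Section~4: the recovered fluxes $\bsigma_c^{rt}$ (defined in (\ref{rt-explicit})) and $\bsigma_c^{bdm}$ (defined in (\ref{bdm-explicit})) lie in $H(\divvr;\O)$, their normal traces on $\Gamma_N$ equal $g_\sN$, and $\bsigma_c^{rt}-\hat{\bsigma}_c=\sd_{c,rt}$, $\bsigma_c^{bdm}-\hat{\bsigma}_c=\sd_{c,bdm}$. I would write everything for the $RT$ recovery; the $\BDM$ case is word for word the same with $\sd_{c,rt},\eta^{rt}_c$ replaced by $\sd_{c,bdm},\eta^{bdm}_c$. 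Since $A=\alpha I$ one has $\bE_c=\bsigma-\hat{\bsigma}_c=-\alpha\grad e_c$, hence $\|\alpha^{1/2}\grad e_c\|_{0,\O}=\|\alpha^{-1/2}\bE_c\|_{0,\O}$, and inserting $\bsigma_c^{rt}$ and using the triangle inequality reduces the claim to
\[
\|\alpha^{-1/2}(\bsigma-\bsigma_c^{rt})\|_{0,\O}\le C\,(\eta^{rt}_c+\hat{H}_f),
\]
because $\|\alpha^{-1/2}(\bsigma_c^{rt}-\hat{\bsigma}_c)\|_{0,\O}=\|\alpha^{-1/2}\sd_{c,rt}\|_{0,\O}=\eta^{rt}_c$.

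The next step is an $\alpha^{-1}$-weighted Helmholtz splitting of $\bsigma-\bsigma_c^{rt}$. Let $\phi\in H^1_D(\O)$ solve $(\alpha\grad\phi,\grad v)=(\bsigma-\bsigma_c^{rt},\grad v)$ for all $v\in H^1_D(\O)$, and set $\br:=\bsigma-\bsigma_c^{rt}-\alpha\grad\phi$, which is $\alpha^{-1}$-orthogonal to $\alpha\grad H^1_D(\O)$, so $\|\alpha^{-1/2}(\bsigma-\bsigma_c^{rt})\|_{0,\O}^2=\|\alpha^{1/2}\grad\phi\|_{0,\O}^2+\|\alpha^{-1/2}\br\|_{0,\O}^2$. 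For the solenoidal part, $\bsigma-\bsigma_c^{rt}=\bE_c-\sd_{c,rt}=-\alpha\grad e_c-\sd_{c,rt}$ with $e_c=u-u_c\in H^1_D(\O)$, so the orthogonality of $\br$ (applied to $\grad e_c$ and to $\grad\phi$) yields $\|\alpha^{-1/2}\br\|_{0,\O}^2=-(\alpha^{-1}\sd_{c,rt},\br)_{0,\O}\le\eta^{rt}_c\,\|\alpha^{-1/2}\br\|_{0,\O}$, i.e.\ $\|\alpha^{-1/2}\br\|_{0,\O}\le\eta^{rt}_c$. For the gradient part, testing with $v=\phi$ and using $\bsigma=-\alpha\grad u$ together with (\ref{vp}), the Neumann trace of $\bsigma_c^{rt}$, and $\bsigma_c^{rt}\in H(\divvr;\O)$ gives the identity $\|\alpha^{1/2}\grad\phi\|_{0,\O}^2=(\divv\bsigma_c^{rt}-f,\phi)$, whence $\|\alpha^{1/2}\grad\phi\|_{0,\O}\le\sup_{0\ne v\in H^1_D(\O)}(\divv\bsigma_c^{rt}-f,v)/\|\alpha^{1/2}\grad v\|_{0,\O}$.

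What remains — and this is the technical heart — is the \emph{robust} bound of that supremum by $C(\eta^{rt}_c+\hat{H}_f)$. Using $\bsigma_c^{rt}=\hat{\bsigma}_c+\sd_{c,rt}\in H(\divvr;\O)$, the identity $\sum_{F\in\cE}(\jump{\hat{\bsigma}_c\cdot\bn_\sF},w)_F=-(f,w)$ for all $w\in S_\sD$ (obtained from (\ref{problem_c}) by elementwise integration by parts, since the conforming numerical flux is piecewise constant), and a weighted Cl\'ement/Scott--Zhang quasi-interpolant $v_I\in S_\sD$, one arrives at
\[
(\divv\bsigma_c^{rt}-f,v)=-\Big[\sum_{F\in\cE}(\jump{\hat{\bsigma}_c\cdot\bn_\sF},v-v_I)_F+(f,v-v_I)\Big]-(\sd_{c,rt},\grad v).
\]
The last term is bounded directly by $\eta^{rt}_c\|\alpha^{1/2}\grad v\|_{0,\O}$; the bracket is the classical interior-plus-edge residual functional applied to $v-v_I$, and to bound it by $C(\eta^{rt}_c+\hat{H}_f)\|\alpha^{1/2}\grad v\|_{0,\O}$ I would use, first, that $\eta^{rt}_c$ is equivalent to the weighted edge-residual norm $\big(\sum_F h_F\alpha_F^{-1}\|\jump{\alpha\grad u_c\cdot\bn_\sF}\|_{0,F}^2\big)^{1/2}$ — because $\jump{\hat{\bsigma}_c\cdot\bn_\sF}$ is constant on $F$ and $\sd_{c,rt,\sF}$ is precisely its lowest-order $RT$ lifting on $\omega_\sF$ with weights $a_{rt,\sF}\in(0,1)$ — and, second, the robust weighted interpolation-error and patchwise Poincar\'e estimates of \cite{BeVe:00,Pet:02}, which under the quasi-monotonicity of Definition~\ref{defnquasimonotone} permit replacing $\|f\|_{0,K}$ by $\|f-\dashint_{\omega_z}f\,dx\|_{0,K}$ on the non-special interior vertex patches, producing exactly $\hat{H}_f$. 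The $\BDM$ recovery $\bsigma_c^{bdm}$ has the same two structural properties and $\bsigma_c^{bdm}-\hat{\bsigma}_c=\sd_{c,bdm}$, so the identical chain of estimates gives the second bound in (\ref{rel-rt}). I expect the main obstacle to be precisely this last step: securing quasi-interpolation, Poincar\'e, and $RT$/$\BDM$-lifting constants that are uniform in the size of the jumps of $\alpha$ — which is exactly where local quasi-monotonicity is indispensable and where the weighted-average structure of $\hat{H}_f$, rather than the crude elementwise oscillation, is forced.
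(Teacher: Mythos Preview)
Your argument is correct, but it is considerably more elaborate than what the paper does. The paper's own proof is a one-line pointer: ``Inequalities (\ref{rel-rt}) may be established in a similar fashion as those in \cite{CaZh:09, CaZh:10c}.'' If you look at how the paper proves the analogous reliability bounds for the mixed and nonconforming cases (Theorems~6.3 and~6.5), the pattern is simply: the explicit recovery $\bsigma_c^{rt}$ (resp.\ $\bsigma_c^{bdm}$) is an admissible competitor in the implicit $L^2$-minimisation defining the estimator $\hat{\eta}_c$ of \cite{CaZh:09}, so $\hat{\eta}_c\le\eta^{rt}_c\ (\le\eta^{bdm}_c)$, and then reliability of $\hat{\eta}_c$ --- already proved in \cite{CaZh:09} under quasi-monotonicity --- gives (\ref{rel-rt}) immediately.

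What you have written is, in effect, a self-contained re-derivation of the reliability result in \cite{CaZh:09}, carried out directly for the explicit recovery. The ingredients you invoke --- $H(\divvr)$-conformity and correct Neumann trace of $\bsigma_c^{rt}$, the equivalence $\eta^{rt}_{c,\sF}\approx\eta_{c,\sF}$ (which indeed holds both ways, as the explicit formula (\ref{rt-I}) and (\ref{c-equi}) show), and the robust Cl\'ement-type interpolation of Bernardi--Verf\"urth/Petzoldt --- are precisely those used in the cited papers, so your route is sound. Two remarks: (i) the Helmholtz decomposition of $\bsigma-\bsigma_c^{rt}$ is a detour in the conforming setting, since $e_c\in H^1_D(\O)$ lets you test the error equation $a(e_c,v)=-(f,v-v_I)-\sum_F(j^c_{f,\sF},v-v_I)_F$ directly with $v=e_c$; the same residual identity you derived then bounds $\|\alpha^{1/2}\grad e_c\|$ in one step, without introducing $\phi$ and $\br$; (ii) your displayed identity for $(\divv\bsigma_c^{rt}-f,v)$ is correct, and in particular does not reintroduce $e_c$, so there is no circularity.
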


\begin{proof} Inequalities (\ref{rel-rt}) may be established in a similar fashion as those 
in \cite{CaZh:09, CaZh:10c}.
\end{proof}

To prove the efficiency bound, consider the edge error estimator and indicator of the residual type:
\beq\label{edgeestimator-c}
\eta_{c,\cE} := \left(  \sum_{F\in\cE_\sI \cup\cE_\sN} \eta_{c,\sF}^2 \right)^{1/2}
\quad\mbox{with}\quad \eta_{c,\sF} = \left\{\begin{array}{lll}
h_\sF j^c_{f,\sF}\big/ \sqrt{\alpha_\sF^+ + \alpha_\sF^-}, & F\in\cE_\sI,\\[4mm]
  h_\sF j^c_{f,\sF}\big/ \sqrt{ \alpha_\sF^-},  & F\in \cE_\sN.
\end{array}
\right.
\eeq
Without assumptions on the distribution of the coefficient $\alpha$, it was proved
by Petzoldt (see equation (5.7) in \cite{Pet:02}) that there exists a constant $C>0$ independent of
$\alpha$ and the mesh size such that
\beq \label{edgeestimator}
\eta_{c,\sF}^2 \leq C\left( \|\alpha^{-1/2} \grad e_c\|_{\o_\sF}^2 + \sum_{K\in\cT_\sF} H_{f,\sK}^2
\right).
\eeq
Let $\cT_\sK = \{T\in\cT: T \mbox{ and } K \mbox{ share at least one edge} \}$.

\begin{thm}\label{thm:eff-c}
The local indicators $\eta^{rt}_{c,F}$, $\eta^{rt}_{c,K}$, $\eta^{bdm}_{c,F}$,
and $\eta^{bdm}_{c,K}$ defined in {\em Section 5} are efficient, 
i.e., there exists a constant $C>0$ independent of
$\alpha$ and the mesh size such that
\begin{eqnarray} \label{c-eff-F}
\eta^{bdm}_{c,\sF} \leq \eta^{rt}_{c,\sF}  &\leq& C \|\alpha^{1/2} \grad e_c\|_{0,\o_\sF}
+C\left( \sum_{K\in\cT_\sF} H_{f,\sK}^2\right)^{1/2}\\[2mm]
\label{c-eff-K1}
\mbox{and}\quad
\eta^{bdm}_{c,\sK}, \,\, \eta^{rt}_{c,\sK} & \leq& C \|\alpha^{1/2} \grad e_c\|_{0,\o_\sK}+
C\left( \sum_{T\in\cT_\sK} H_{f,\sT}^2\right)^{1/2} .
 \end{eqnarray}
\end{thm}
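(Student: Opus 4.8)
The plan is to reduce the four bounds to a single scalar computation on one edge patch and then quote the residual-type edge estimate (\ref{edgeestimator}). First I would dispose of the inequality $\eta^{bdm}_{c,\sF}\le\eta^{rt}_{c,\sF}$ by a nestedness argument: since $RT(K)=P_0(K)^2+(x_1,x_2)^tP_0(K)\subset P_1(K)^2=BDM(K)$, the local space $\cV^c_{_{-1,F}}$ with $\cV=RT$ is contained in the one with $\cV=BDM$, the two constrained minimization problems (\ref{c-mini}) carry the same objective $\|A^{-1/2}\btau\|_{0,\o_\sF}$, and the $RT$ minimizer $\sd_{c,rt,\sF}$ satisfies the $BDM$ constraints as well. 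Hence $\eta^{bdm}_{c,\sF}=\min_{BDM^c_{_{-1,F}}}\|A^{-1/2}\btau\|_{0,\o_\sF}\le\|A^{-1/2}\sd_{c,rt,\sF}\|_{0,\o_\sF}=\eta^{rt}_{c,\sF}$, so from now on only $\eta^{rt}_{c,\sF}$ needs a genuine upper bound.

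Next I would bound $\eta^{rt}_{c,\sF}$ by the residual edge indicator $\eta_{c,\sF}$ of (\ref{edgeestimator-c}). For an interior edge $F\in\cE_\sI$, substituting the explicit formula (\ref{rt-I}) into the definition of $\eta^{rt}_{c,\sF}$ and using $A|_{K^\pm_\sF}=\alpha^\pm_\sF I$ together with the abbreviations $\beta^\pm_\sF=\alpha_\sF^{\pm}{}^{-1}\|\bpsi^{rt}_\sF\|^2_{0,K^\pm_\sF}$ and $a_{rt,\sF}=\beta^-_\sF/(\beta^-_\sF+\beta^+_\sF)$ from Section 4.1.2, the two-piece $L^2$ norm collapses to the identity
\[
(\eta^{rt}_{c,\sF})^2=\|A^{-1/2}\sd_{c,rt,\sF}\|^2_{0,\o_\sF}
=(j^c_{f,\sF})^2\left[(1-a_{rt,\sF})^2\beta^-_\sF+a_{rt,\sF}^2\beta^+_\sF\right]
=(j^c_{f,\sF})^2\,\frac{\beta^-_\sF\,\beta^+_\sF}{\beta^-_\sF+\beta^+_\sF},
\]
while for a Neumann edge (\ref{rt-N}) gives $(\eta^{rt}_{c,\sF})^2=(j^c_{f,\sF})^2\beta^-_\sF$ and for a Dirichlet edge (\ref{rt-D}) gives $\eta^{rt}_{c,\sF}=0$. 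By shape-regularity of $\cT$ one has the standard scaling $\|\bpsi^{rt}_\sF\|^2_{0,K^\pm_\sF}\approx h^2_\sF$ (Appendix A), hence $\beta^\pm_\sF\approx h^2_\sF/\alpha^\pm_\sF$ and therefore $(\eta^{rt}_{c,\sF})^2\approx h^2_\sF(j^c_{f,\sF})^2/(\alpha^+_\sF+\alpha^-_\sF)=\eta^2_{c,\sF}$ on $\cE_\sI$ and $(\eta^{rt}_{c,\sF})^2\approx h^2_\sF(j^c_{f,\sF})^2/\alpha^-_\sF=\eta^2_{c,\sF}$ on $\cE_\sN$; that is, $\eta^{rt}_{c,\sF}\approx\eta_{c,\sF}$. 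Combining this equivalence with Petzoldt's bound (\ref{edgeestimator}) yields (\ref{c-eff-F}) for $\eta^{rt}_{c,\sF}$, and then for $\eta^{bdm}_{c,\sF}$ by the first step.

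Finally, for the element indicators I would exploit locality of the error-flux decomposition: each piece $\sd_{c,rt,\sF}$ (resp. $\sd_{c,bdm,\sF}$) is supported in $\o_\sF$, which is a triangle of $\cT$ only if $F$ is one of its edges, so on a fixed $K$ only the three edge contributions survive. The triangle inequality then gives $\eta^{rt}_{c,\sK}\le\sum_{F\in\p K}\|A^{-1/2}\sd_{c,rt,\sF}\|_{0,K}\le\sum_{F\in\p K}\eta^{rt}_{c,\sF}$, and likewise $\eta^{bdm}_{c,\sK}\le\sum_{F\in\p K}\eta^{bdm}_{c,\sF}\le\sum_{F\in\p K}\eta^{rt}_{c,\sF}$; applying the edge bound just established to each of the boundedly many terms and noting that $\bigcup_{F\in\p K}\o_\sF$ is exactly the edge patch $\o_\sK=\bigcup_{T\in\cT_\sK}T$ gives (\ref{c-eff-K1}). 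The only nonroutine parts are the algebraic collapse to the scalar identity for $(\eta^{rt}_{c,\sF})^2$ and the basis scaling $\|\bpsi^{rt}_\sF\|^2_{0,K}\approx h^2_\sF$; once $\eta^{rt}_{c,\sF}\approx\eta_{c,\sF}$ is in hand everything reduces to bookkeeping, with the genuinely delicate inequality (\ref{edgeestimator}) quoted from \cite{Pet:02} (this parallels the efficiency argument in \cite{CaZh:09, CaZh:10c}).
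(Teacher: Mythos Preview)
Your proposal is correct and follows essentially the same route as the paper: the nestedness $RT^c_{-1,F}\subset BDM^c_{-1,F}$ for the first inequality, the explicit formula (\ref{rt-I}) together with the scaling $\|\bpsi^{rt}_\sF\|_{0,K}^2\approx h_\sF^2$ to reduce $\eta^{rt}_{c,\sF}$ to $\eta_{c,\sF}$, then Petzoldt's bound (\ref{edgeestimator}), and finally the triangle inequality over the three edges of $K$ for the element indicators. Your exact algebraic collapse $(\eta^{rt}_{c,\sF})^2=(j^c_{f,\sF})^2\beta^-_\sF\beta^+_\sF/(\beta^-_\sF+\beta^+_\sF)$ is in fact slightly cleaner than the paper's version, which substitutes the equivalence (\ref{c-equi}) directly into the two-term sum, but the content is identical.
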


\begin{proof}
Without loss of generality, we  establish the efficiency bounds only for interior edges.
The first inequality of (\ref{c-eff-F}) is a direct consequence of 
the minimization problem in (\ref{c-mini}) and the fact that $RT^c_{-1,F}\subset BDM^c_{-1,F}$. 
To prove the second inequality of (\ref{c-eff-F}), we assume that the triangulation is regular. 
By the equivalence in (\ref{c-equi}) and the fact that $\|\bpsi^{rt}_\sF\|_{0,\o_\sF}\leq C\,h^2_\sF$, we have
 \begin{eqnarray*}
 \left(\eta^{rt}_{c,\sF} \right)^2
 &=& \|\alpha_{\sF^-}^{-1/2}\sd_{c,rt,\sF}\|_{0,K^-_\sF}^2
 + \|\alpha_{\sF^+}^{-1/2}\sd_{c,rt,\sF}\|_{0,K^+_\sF}^2  \\[2mm]
 &\leq& C\left( \dfrac{1}{\alpha_\sF^-} \left(\dfrac{\alpha_\sF^-}{\alpha_\sF^- + \alpha_\sF^+} \right)^2
 +  \dfrac{1}{\alpha_\sF^+} \left(\dfrac{\alpha_\sF^+}{\alpha_\sF^- + \alpha_\sF^+} \right)^2 \right)
 \left(j_{f,\sF}^c h_\sF\right)^2 = C\, \eta_{c,\sF}^2,
\end{eqnarray*}
which, combining with (\ref{edgeestimator}), implies the second inequality of (\ref{c-eff-F}). It is easy to see that
 \[
\left( \eta^{rt}_{c,\sK} \right)^2 \leq \sum_{F\in\cE_\sK} \left( \eta^{rt}_{c,\sF} \right)^2
\quad\mbox{and}\quad
\left( \eta^{bdm}_{c,\sK} \right)^2 \leq \sum_{F\in\cE_\sK} \left( \eta^{bdm}_{c,\sF} \right)^2.
\]
Now, (\ref{c-eff-K1}) follows from (\ref{c-eff-F}).
This completes the proof of the theorem.
\end{proof}

\subsection{Mixed Elements}

\begin{thm}
Assume that the distribution of the coefficient $\alpha$ is  quasi-monotone. Then
the error estimator $\eta^{nd}_{m}$ satisfies the following
global reliability bound:
\beq
\|\alpha^{-1/2}\bE_m\|_{0,\O} \leq C (\eta_m^{nd}+H_f + G_{\grad_h\times(\alpha^{-1}\bsigma_m)}),
\eeq
where $G_{\grad_h\times(\alpha^{-1}\bsigma_m)}$ is a higher order term if
$\grad_h\times(\alpha^{-1}\bsigma_m) \in L^p(\O)$ with $p>2$.
Moreover, if $\cV = RT$, then
\beq
\|\alpha^{-1/2}\bE_m\|_{0,\O} \leq C (\eta_m^{nd}+H_f).
\eeq
\end{thm}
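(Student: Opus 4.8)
The plan is to prove the reliability bound for the mixed-element estimator $\eta_m^{nd}$ by establishing an error representation for $\bE_m$ in terms of the recovered gradient plus two defect terms, then bounding each piece. First I would note that the gradient error satisfies $\grad u - \hat\brho_m = -A^{-1}\bE_m$ by \eqref{g-error}, so that $\alpha^{-1/2}\bE_m = -\alpha^{1/2}(\grad u - \hat\brho_m)$; thus it suffices to control $\|\alpha^{1/2}(\grad u - \hat\brho_m)\|_{0,\O}$. Because $\grad u$ is (locally) a gradient and lies in $H(\curll;\O)$ with zero curl, while $\hat\brho_m$ does not, the natural device is a Helmholtz-type decomposition of the error with respect to the $A$-weighted inner product: write $\grad u - \hat\brho_m = \grad\phi + \alpha^{-1}\gperp\psi$ (or the curl-conforming analogue), where $\phi\in H^1_D(\O)$ and $\psi$ captures the rotational part. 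The irrotational component $\grad\phi$ is orthogonal, in the $A$-weighted inner product, to divergence-type test functions and can be handled using the mixed equation \eqref{problem_mixed} together with the data oscillation term $H_f$; this is where the $f-f_\cT$ contribution enters.

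The second step is to bound the rotational component. Here I would test against $\alpha^{-1}\gperp\psi$ and use that $\curlt(\grad u)=0$, so $(\alpha^{-1}\bE_m, \gperp\psi) = -(\hat\brho_m, \gperp\psi)$, and integrate by parts edge-by-edge to expose the tangential jumps $j^m_{g,\sF} = \jump{\hat\brho_m\cdot\bt_\sF}_\sF$ of the numerical gradient. By \eqref{jump-g}–\eqref{jump-nc} these jumps are exactly what the $\ND$ recovery annihilates: the recovered gradient $\brho_m^{nd}$ is curl-conforming and $\rd_{m,nd} = \brho_m^{nd} - \hat\brho_m$ carries the jump information. So one replaces $\psi$ by a quasi-interpolant $\psi_h$ (Clément/Scott–Zhang type, with the weighting adapted to $\alpha$ to retain robustness under the quasi-monotonicity hypothesis), and the resulting sum over edges of $j^m_{g,\sF}$ against local averages of $\psi$ is bounded by $\|A^{1/2}\rd_{m,nd}\|_{0,\O} = \eta_m^{nd}$ times $\|\alpha^{-1/2}\gperp\psi\|_{0,\O}$, up to the data term and the term $G_{\grad_h\times(\alpha^{-1}\bsigma_m)}$ measuring the (elementwise) curl of $\alpha^{-1}\bsigma_m$, which vanishes when $\cV=RT$ because then $\alpha^{-1}\bsigma_m$ is piecewise constant and has zero elementwise curl — giving the improved bound in that case.

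The key technical input is the robust (coefficient-independent) quasi-interpolation estimate on patches, valid under local quasi-monotonicity, of the form $\|\alpha^{1/2}h_K^{-1}(\psi - \psi_h)\|_{0,K} \le C\|\alpha^{1/2}\grad\psi\|_{0,\omega_K}$ with $C$ independent of the jumps; this is precisely the ingredient used in \cite{BeVe:00, Pet:02} and in our earlier work \cite{CaZh:09, CaZh:10a}, and I would invoke it rather than reprove it. Combining the two components via the Helmholtz decomposition and absorbing, one obtains
\[
\|\alpha^{-1/2}\bE_m\|_{0,\O} \le C\big(\eta_m^{nd} + H_f + G_{\grad_h\times(\alpha^{-1}\bsigma_m)}\big),
\]
with the $G$-term dropping out when $\cV=RT$.

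The main obstacle I anticipate is the robustness bookkeeping in the rotational estimate: one must choose the weighted quasi-interpolant and localize the jump sum so that every constant that appears depends only on the shape-regularity of $\cT$ and not on $\max_K\alpha_K/\min_K\alpha_K$. This is delicate precisely at vertices where the coefficient distribution is quasi-monotone but not monotone, and it is exactly the point where the hypothesis of Definition~\ref{defnquasimonotone} is used. Everything else — the integration by parts, the identification of the jumps with the recovered defect $\rd_{m,nd}$, and the $H_f$ estimate for the irrotational part — is routine once that weighted interpolation estimate is in hand.
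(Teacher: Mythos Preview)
Your proposal is correct in substance but takes a much longer route than the paper. The paper's proof is two lines: since the recovered gradient $\brho_m^{nd}$ is a particular element of $\ND$, the implicit estimator
\[
\hat{\eta}_m = \min_{\btau\in\ND}\|\alpha^{1/2}\btau + \alpha^{-1/2}\bsigma_m\|_{0,\O}
\]
from \cite{CaZh:10a} trivially satisfies $\hat{\eta}_m \le \eta_m^{nd}$, and the reliability of $\hat{\eta}_m$ (with exactly the stated right-hand side, including the $G_{\grad_h\times(\alpha^{-1}\bsigma_m)}$ term and its disappearance for $\cV=RT$) is already Theorem~6.2 of \cite{CaZh:10a}. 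Nothing further is needed.

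What you have sketched --- Helmholtz decomposition of the error, bounding the irrotational part by the mixed equations and $H_f$, exposing the tangential jumps $j^m_{g,\sF}$ in the rotational part by integration by parts, and controlling them with a coefficient-weighted Cl\'ement interpolant under quasi-monotonicity --- is essentially the proof \emph{of} that cited theorem, not of the present one. It would work, and it is self-contained, but it reproduces machinery the paper deliberately offloads to the earlier reference. The paper's approach buys brevity and makes transparent that reliability of the explicit estimator is an immediate corollary of reliability of the implicit one; your approach buys independence from \cite{CaZh:10a} at the cost of redoing the delicate robustness bookkeeping you yourself flag as the main obstacle.
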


\begin{proof}
Let $\hat{\eta}_m$ be the implicit recovery-based estimator introduced in \cite{CaZh:10a}, i.e., 
 \[
 \hat{\eta}_{m} = \min_{\btau \in \ND} \|\alpha^{1/2}\btau + \alpha^{-1/2}\bsigma_m\|_{0,\O}.
 \]
It is obvious that $\hat{\eta}_{m} \leq \eta^{nd}_{m}$. Now, the theorem is a direct consequence of 
Theorem 6.2 of \cite{CaZh:10a}.
\end{proof}

The efficiency of the $\eta^{nd}_{m}$ may be established by a direct calculation similar to the proof of
Theorem \ref{thm:eff-c}. However, the calculation is quite complicated in this case. We will prove it
through the following Helmholtz decomposition (see, e.g., \cite{GiRa:86}) of the error flux $\bE_m$: 
there exist $\xi_m\in H_\sD^1(\O)$ and 
$\zeta_m \in H^1_N(\O)\equiv \{v\in H^1(\O)\big| \,v = 0\mbox{ on }\Gamma_N\}$ such that
\begin{eqnarray}\label{H-decom}
&& \bE_m 
= \alpha \grad \xi_m+\gperp \zeta_m\\[2mm]\nonumber
\mbox{and}
&& \|\a^{-1/2}\bE_m\|^2_{0,\O}
=\|\alpha^{1/2}\grad \xi_m\|^2_{0,\O} + \|\alpha^{-1/2} \gperp \zeta_m\|^2_{0,\O}.
\end{eqnarray}

\begin{thm}
The local indicators $\eta^{nd}_{m,\sF}$ and
$\eta^{nd}_{m,\sK}$ and the global error estimator $\eta^{nd}_m$
are efficient, i.e., there exists a constant $C>0$ independent of
$\alpha$ and the mesh size such that
\begin{eqnarray} \label{m-eff}
\eta^{nd}_{m,\sF}  &\leq& C \|\alpha^{-1/2} \gperp\zeta_m\|_{0,\o_\sF},
\quad
 \eta^{nd}_{m,\sK}  \leq C \|\alpha^{-1/2} \gperp\zeta_m\|_{0,\o_\sK},\\[2mm]
\label{m-eff-G}
 \mbox{and}\quad   \eta^{nd}_{m}  &\leq & C \|\alpha^{-1/2} \gperp\zeta_m\|_{0,\O} \leq
C \|\alpha^{1/2} \bE_m\|_{0,\O}.
\end{eqnarray}
\end{thm}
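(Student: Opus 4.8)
The plan is to bound the local indicator $\eta^{nd}_{m,\sF} = \|A^{1/2}\rd_{m,nd,\sF}\|_{0,\o_\sF}$ by the local norm of $\gperp\zeta_m$ over the patch $\o_\sF$. The starting point is that, by the minimization property (\ref{mix-mini}) defining $\tilde{\brho}_{m,\sF}$ and the structure of $\rd_{m,nd,\sF}$ in (\ref{app-err-g-F}), the indicator is controlled by the norm of \emph{any} admissible competitor in $\ND^m_{\sF}+\brho^m_{j,\sF}$; equivalently, by the jump data $j^m_{g,\sF}$ scaled appropriately. Concretely, I would first show
\[
\eta^{nd}_{m,\sF} \;\le\; C\,\|A^{1/2}\brho^m_{j,\sF}\|_{0,\o_\sF} \;\le\; C\,\frac{h_\sF}{\sqrt{\alpha_\sF^-+\alpha_\sF^+}}\,\|j^m_{g,\sF}\|_{0,\sF},
\]
using the explicit form (\ref{rhojD})--(\ref{rhojI}) of $\brho^m_{j,\sF}$, the bound $\|\bpsi^{nd}_{i,\sF}\|_{0,\o_\sF}\le C h_\sF$, the equivalences (\ref{c-equi-bdm}), and the fact that $\|j^m_{g,\sF}\|_{0,\sF}$ is controlled by $h_\sF^{1/2}$ times the endpoint values $c_{i,\sF}$ (since on $F$ the basis traces $\bpsi^{nd}_{i,\sF}\cdot\bt_\sF$ are affine with $O(1)$ nodal values). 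This is the routine local-estimate half and mirrors the calculation in the proof of Theorem~\ref{thm:eff-c}.

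The substantive step is a standard-technique efficiency bound for the tangential jump: one must show
\[
\frac{h_\sF}{\sqrt{\alpha_\sF^-+\alpha_\sF^+}}\,\|j^m_{g,\sF}\|_{0,\sF} \;\le\; C\,\|\alpha^{-1/2}\gperp\zeta_m\|_{0,\o_\sF}.
\]
Here I would exploit (\ref{g-error}) and the Helmholtz decomposition (\ref{H-decom}): since $\bE_m = \alpha\grad\xi_m + \gperp\zeta_m$, we have $-A^{-1}\bE_m = \grad u - \hat{\brho}_m = -\grad\xi_m - \alpha^{-1}\gperp\zeta_m$, and the tangential jump of $-A^{-1}\bE_m$ across $F$ equals $-j^m_{g,\sF}$ by (\ref{g-jump}). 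Because $\xi_m\in H^1_D(\O)$ is single-valued, $\jump{\grad\xi_m\cdot\bt_\sF}_\sF=0$, so $j^m_{g,\sF} = \jump{\alpha^{-1}\gperp\zeta_m\cdot\bt_\sF}_\sF$, i.e., the jump in the tangential derivative of $\zeta_m$ weighted by $\alpha^{-1}$. A bubble-function argument on the edge $F$ — using an edge-bubble $\varphi_\sF$ supported in $\o_\sF$, testing the weak form, integrating by parts element-by-element, and invoking inverse/scaling inequalities on the shape-regular patch, together with the local quasi-monotonicity built into the weights — yields the claimed bound with a constant independent of the jumps in $\alpha$; this is the same mechanism used by Ainsworth \cite{Ain:07} and in \cite{CaZh:10a} for tangential-jump indicators.

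From the edge bounds, the element indicator follows from $(\eta^{nd}_{m,\sK})^2 \le \sum_{F\in\cE_\sK}(\eta^{nd}_{m,\sF})^2$ and finite overlap of the patches $\{\o_\sF\}$, giving $\eta^{nd}_{m,\sK}\le C\|\alpha^{-1/2}\gperp\zeta_m\|_{0,\o_\sK}$; summing over all edges and again using finite overlap gives the global bound $\eta^{nd}_m\le C\|\alpha^{-1/2}\gperp\zeta_m\|_{0,\O}$. The final inequality $\|\alpha^{-1/2}\gperp\zeta_m\|_{0,\O}\le C\|\alpha^{1/2}\bE_m\|_{0,\O}$ — note the exponent must in fact read $\|\alpha^{-1/2}\bE_m\|_{0,\O}$ for dimensional consistency — is immediate from the orthogonality in (\ref{H-decom}), since $\|\alpha^{-1/2}\gperp\zeta_m\|^2_{0,\O}$ is one of the two nonnegative terms summing to $\|\alpha^{-1/2}\bE_m\|^2_{0,\O}$.

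\textbf{Main obstacle.} The delicate point is the robustness (jump-independence) of the constant in the tangential-jump efficiency estimate. The bubble-function/inverse-inequality argument naturally produces local constants, but one must verify that when $\alpha$ is discontinuous across $F$ the weight $1/\sqrt{\alpha_\sF^-+\alpha_\sF^+}$ is exactly the right normalization so that no power of the jump ratio survives — this is where the monotonicity structure of the weights (\ref{c-equi-bdm}) and, for the global reliability side, local quasi-monotonicity of the coefficient distribution, must be used carefully. I also expect the $\BDM/\ND$ case to require slightly more bookkeeping than the $RT/\NE$ case because the indicator involves two coefficients $\rho_{s,\sF},\rho_{e,\sF}$ with the more complicated formulas derived from (\ref{mix-mini}); but the minimization characterization lets us bypass those formulas and bound everything by the jump data directly.
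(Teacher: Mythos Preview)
Your overall architecture matches the paper's proof: bound the local indicator by the tangential jump data, then bound the jump by $\|\alpha^{-1/2}\gperp\zeta_m\|_{0,\o_\sF}$ via the residual edge estimator of \cite{CaZh:10a}. The paper carries out the first step exactly as you suggest (take $\btau=\bzero$ in (\ref{mix-mini}) to get $\eta^{nd}_{m,\sF}\le\|A^{1/2}\brho^m_{j,\sF}\|_{0,\o_\sF}$, then use $\|\bpsi^{nd}_{i,\sF}\|_{0,K}\approx h_\sF$), and for the second step simply quotes Proposition~6.6 of \cite{CaZh:10a} rather than redoing the bubble argument.

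However, there is a genuine error in your intermediate weights: the $\alpha$ scaling is inverted. Because the gradient recovery is measured in the $A^{1/2}$ (not $A^{-1/2}$) norm and $\brho^m_{j,\sF}$ from (\ref{rhojI}) is supported only on $K_\sF^-$, one has
\[
\|A^{1/2}\brho^m_{j,\sF}\|_{0,\o_\sF}
=\sqrt{\alpha_\sF^-}\,\|\brho^m_{j,\sF}\|_{0,K_\sF^-}
\le C\,\sqrt{\alpha_\sF^-}\,h_\sF\bigl(|c_{s,\sF}|+|c_{e,\sF}|\bigr)
\le C\,\sqrt{\alpha_\sF^-}\,h_\sF^{1/2}\|j^m_{g,\sF}\|_{0,\sF},
\]
so the weight that appears is $\sqrt{\alpha_\sF^-}\le\sqrt{\alpha_\sF^-+\alpha_\sF^+}$, \emph{not} $1/\sqrt{\alpha_\sF^-+\alpha_\sF^+}$. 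Your first displayed inequality is therefore false in general (take $\alpha_\sF^-\to\infty$). Correspondingly, the relevant residual edge indicator is the one in (\ref{6.11a}),
\[
\eta_{m,\sF}^2=\tfrac{\alpha_\sF^-+\alpha_\sF^+}{2}\,h_\sF\int_\sF|j^m_{g,\sF}|^2\,ds,
\]
and the ``substantive step'' one actually needs is $\eta_{m,\sF}\le C\|\alpha^{-1/2}\gperp\zeta_m\|_{0,\o_\sF}$, i.e.\ with the weight $\sqrt{\alpha_\sF^-+\alpha_\sF^+}$ on the \emph{left}, not in the denominator. The equivalences (\ref{c-equi-bdm}) do not help you reach $1/\sqrt{\alpha_\sF^-+\alpha_\sF^+}$; at best a direct computation with those weights yields the harmonic-mean factor $\sqrt{\alpha_\sF^-\alpha_\sF^+/(\alpha_\sF^-+\alpha_\sF^+)}$, which is still bounded above by $\sqrt{\alpha_\sF^-+\alpha_\sF^+}$. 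In short, you have transplanted the conforming-flux weight structure (Theorem~\ref{thm:eff-c}) onto the mixed-gradient case, where the roles of $A^{1/2}$ and $A^{-1/2}$ are swapped. Once the weights are corrected, your outline goes through exactly as in the paper. (Your observation about $\alpha^{1/2}$ versus $\alpha^{-1/2}$ in (\ref{m-eff-G}) is well taken.)
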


\begin{proof}
Without loss of generality, we  establish the efficiency bounds only for interior edges.
Let $\eta_{m,\sF}$ and $\eta_{m}$ be the respective edge indicator and estimator defined 
in \cite{CaZh:10a}, where 
  \begin{equation}\label{6.11a}
 \eta_{m,\sF}^2=\dfrac{\a^-_\sF+\a^+_\sF}{2} h_\sF \int_\sF \large| j^m_{g,\sF}\large|^2\,ds.
 \end{equation}
 It is proved in Proposition 6.6 of \cite{CaZh:10a} that
\beq\label{mixedge}
\eta_{m,\sF} \leq \|\alpha^{-1/2} \gperp \zeta_m\|_{0,\omega_\sF} \;\mbox{   and   }\;
\eta_{m}\leq C\|\alpha^{-1/2} \gperp \zeta_m\|_{0,\Omega} \leq C\|\alpha^{1/2}\bE_m\|_{0,\Omega}.
\eeq
Since $\|\bpsi^{nd}_{i,F}\|_K\approx C\,h_\sF$ for $i=s,e$,
it follows from (\ref{mix-mini}) with $\btau=\bzero$, (\ref{rhojI}), and the triangle inequality that
\begin{eqnarray*}
\eta^{nd}_{m,\sF} & =& \|\alpha^{1/2} \rd_{m,nd,\sF}\|_{0,\omega_\sF}
\leq \|\alpha^{1/2} \brho^m_{j,\sF}\|_{0,\omega_\sF}
\\[2mm]
&=& \sqrt{\alpha^-_\sF} \left(|c_{s,\sF}|\, \|\bpsi^{nd,-}_{s,\sF}\|_{0,K_\sF^-}
+|c_{e,\sF}|\, \|\bpsi^{nd,-}_{e,F}\|_{0,K_\sF^-}\right)\\[2mm]
&\leq &C\, h_\sF\sqrt{\alpha^-_\sF}\left(|c_{s,\sF}|+|c_{e,\sF}|\right) .
\end{eqnarray*}
Note that 
 \[
 c_{s,\sF}= j^m_{g,\sF}(\bs_\sF)
 \quad\mbox{and}\quad
 c_{e,\sF}= j^m_{g,\sF}(\be_\sF)
 \]
and that $j^m_{g,\sF}$ is an affine function on $F$, it is then easy to check that
there exists 
 a constant $C>0$ independent of $\a$ and $h_\sF$ such that 
 \[
  |c_{s,\sF}|+|c_{e,\sF}|
    \leq C\,h_\sF^{-1/2} \left(\int_\sF \large| j^m_{g,\sF}\large|^2\,ds\right)^{1/2}.
 \]
By using the above two inequalities, we have
 \[
 \eta^{nd}_{m,\sF}
 \leq C\, h_\sF^{1/2}\sqrt{\alpha^-_\sF}  \left(\int_\sF \large| j^m_{g,\sF}\large|^2\,ds\right)^{1/2}
 \leq C\,  \eta_{m,\sF},
 \]
which, together with (\ref{mixedge}), implies the validity of the first inequality in (\ref{m-eff}).
Now, the second inequality in (\ref{m-eff}) and (\ref{m-eff-G}) are straightforward from
the definitions and (\ref{mixedge}).
\end{proof}

\subsection{Nonconforming Elements}

\begin{thm}
Assume that the distribution of the coefficient $\a$ is quasi-monotone. Then
the error estimators $\eta^{rh}_{nc}$ and $\eta^{bd}_{nc}$ satisfy the 
global reliability bounds:
 \begin{eqnarray}\label{nc-rel-1}
 && \|\alpha^{1/2} \grad_h e_{nc}\|_{0,\O} 
 \leq  C\left(\eta^{bd}_{nc} +H_f\right)\\[2mm] \label{nc-rel-2}
 \mbox{and} &&
\|\alpha^{1/2} \grad_h e_{nc}\|_{0,\O} \leq  C\left(\eta^{rh}_{nc} +H_f\right).
 \end{eqnarray}
\end{thm}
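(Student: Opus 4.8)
The plan is to mirror the proof of the mixed-method reliability bound above: reduce the explicit estimators $\eta^{rh}_{nc}$ and $\eta^{bd}_{nc}$ to the implicit $L^2$-recovery estimators of \cite{CaZh:10a,CaYeZh:11} and then invoke the robust reliability established there. Since $A=\alpha I$ and $\bE_{nc}=-\alpha\grad_h e_{nc}$, one has $\|\alpha^{1/2}\grad_h e_{nc}\|_{0,\O}=\|\alpha^{-1/2}\bE_{nc}\|_{0,\O}$, so it suffices to bound $\|\alpha^{-1/2}\bE_{nc}\|_{0,\O}$. Introduce the implicit $RT$-$\NE$ estimator
\[
\hat\eta^{rh}_{nc}=\Big(c_1\min_{\btau\in RT\cap H_{g,\sN}(\divvr;\O)}\|\alpha^{-1/2}(\btau-\hat{\bsigma}_{nc})\|_{0,\O}^2
+c_2\min_{\btau}\|\alpha^{1/2}(\btau-\hat{\brho}_{nc})\|_{0,\O}^2\Big)^{1/2},
\]
the second minimum being over $\btau\in\NE$ with $\btau\cdot\bt=\grad g_\sD\cdot\bt$ on $\Gamma_\sD$, and define $\hat\eta^{bd}_{nc}$ analogously with $\BDM$ and $\ND$ in place of $RT$ and $\NE$.

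Step one is that the explicit estimators dominate the implicit ones: $\hat\eta^{rh}_{nc}\le\eta^{rh}_{nc}$ and $\hat\eta^{bd}_{nc}\le\eta^{bd}_{nc}$. This is where the constructions of Section~4 pay off. The recovered fluxes $\bsigma_{nc}^{rt},\bsigma_{nc}^{bdm}$ lie in $H(\divvr;\O)$ and have normal trace $g_\sN$ on $\Gamma_\sN$ (see (\ref{rt-coef-nc})), and the recovered gradients $\brho_{nc}^{ne},\brho_{nc}^{nd}$ lie in $H(\curll;\O)$ and have tangential trace $\grad g_\sD\cdot\bt$ on $\Gamma_\sD$ (see (\ref{ne-coef-nc})); hence each is a feasible competitor in the corresponding constrained minimization above. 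Since $\sd_{nc,rt}=\bsigma_{nc}^{rt}-\hat{\bsigma}_{nc}$, $\sd_{nc,bdm}=\bsigma_{nc}^{bdm}-\hat{\bsigma}_{nc}$, $\rd_{nc,ne}=\brho_{nc}^{ne}-\hat{\brho}_{nc}$ and $\rd_{nc,nd}=\brho_{nc}^{nd}-\hat{\brho}_{nc}$, a termwise comparison followed by the $(c_1,c_2)$-weighted combination yields the two inequalities. Step two is to invoke the robust reliability of the implicit nonconforming estimators from \cite{CaZh:10a}: under local quasi-monotonicity, $\|\alpha^{-1/2}\bE_{nc}\|_{0,\O}\le C(\hat\eta^{rh}_{nc}+H_f)$ and likewise for $\hat\eta^{bd}_{nc}$, with $C$ independent of the jumps of $\alpha$ and of $h$. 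Combining the two steps gives (\ref{nc-rel-1}) and (\ref{nc-rel-2}).

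The mechanism behind Step two — and the reason it is cleaner to cite \cite{CaZh:10a} than to reprove it here — is the $\alpha^{-1}$-orthogonal Helmholtz decomposition $\bE_{nc}=\alpha\grad\xi+\gperp\zeta$ with $\xi\in H^1_D(\O)$, $\zeta\in H^1_N(\O)$, analogous to (\ref{H-decom}): the $\alpha\grad\xi$ component is paired against $\bsigma-\bsigma_{nc}^{rt}$ (resp.\ $\bsigma-\bsigma_{nc}^{bdm}$) via integration by parts, with the boundary contributions vanishing because the normal traces agree on $\Gamma_\sN$, while the $\gperp\zeta$ component is paired against $\hat{\brho}_{nc}-\brho_{nc}^{ne}$ (resp.\ $\hat{\brho}_{nc}-\brho_{nc}^{nd}$) through a dual integration by parts together with the nonconforming consistency identity for $u_{nc}$ and the vanishing mean $\int_E\phi^{nc}_\sF\,ds=0$ on the outer edges of $\o_\sF$. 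The main obstacle is the jump-robustness of the $\alpha$-weighted Cl\'ement/Crouzeix--Raviart-type quasi-interpolation operators used in these two pairings: one needs their stability and approximation constants to be independent of the size of the jumps, which is exactly where the local quasi-monotonicity hypothesis enters (cf.\ \cite{BeVe:00,Pet:02}); checking that the residual contribution this generates is controlled by the higher-order term $H_f$ (rather than $\hat H_f$) is the remaining point needing care. Everything specific to the present paper — the closed-form recoveries, their $H(\divvr)$/$H(\curll)$ conformity, and their boundary traces — has already been verified in Section~4, so those ingredients are routine.
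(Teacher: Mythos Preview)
Your proposal is correct and follows essentially the same route as the paper: bound the explicit estimators from below by the implicit $L^2$-recovery estimators of \cite{CaZh:10a} (using that the recovered flux/gradient are admissible competitors in the respective minimizations) and then invoke Theorem~6.4 of \cite{CaZh:10a}. The paper's version is slightly more economical in that it introduces a single implicit estimator (with $\BDM$ and $\ND$) and uses the chain $\hat{\eta}_{nc,1}\le\eta^{bdm}_{nc}\le\eta^{rt}_{nc}$, $\hat{\eta}_{nc,2}\le\eta^{nd}_{nc}\le\eta^{ne}_{nc}$ rather than two separate implicit estimators, and it omits the Helmholtz discussion you include since that is absorbed into the citation; but these are cosmetic differences.
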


\begin{proof}
Let $\hat{\eta}_{nc}$ be the implicit recovery-based estimator introduced in \cite{CaZh:10a}: 
 \[
 \hat{\eta}_{nc}^2= c\, \hat{\eta}_{nc,1}^2+ (1-c)\hat{\eta}_{nc,2}^2
 \]
with $c\in (0,1)$ being a parameter to be determined, where 
 \[
 \hat{\eta}_{nc,1}=\min_{\btau \in \BDM} \|\alpha^{-1/2}\btau + \alpha^{1/2}\grad_h u_{nc}\|_{0,\O}
 \quad\mbox{and}\quad
 \hat{\eta}_{nc,2}=\min_{\btau \in \ND} \|\alpha^{1/2}(\btau - \grad_h u_{nc})\|_{0,\O}.
 \]
It is obvious that $\hat{\eta}_{nc,1}\leq  \eta^{bdm}_{nc}  \leq \eta^{rt}_{nc}$ and that
$\hat{\eta}_{nc,2}\leq  \eta^{nd}_{nc}  \leq \eta^{ne}_{nc}$. Now, (\ref{nc-rel-1})  and 
(\ref{nc-rel-2}) follow from Theorem 6.4 of \cite{CaZh:10a}.
\end{proof}

To prove the efficiency of the explicit error estimators, consider the weighted
edge error estimator introduced in \cite{CaZh:10a}:
\[
\eta_{nc,\cE} := \left( \sum_{F\in\cE} \eta_{nc,\sF}^2 \right)^{1/2}
\mbox{with}\quad
\eta_{nc,\sF}^2 = \left\{ \begin{array}{lll}
 \dfrac{2h_\sF^2}{\alpha_K^+ + \alpha_K^-} \left(j^{nc}_{f,\sF}\right)^2  +
 \dfrac{h_\sF^2\alpha_K^+ \alpha_K^-}{\alpha_K^+ + \alpha_K^-} \left(j^{nc}_{g,\sF}\right)^2, & F\in\cE_\sI, \\[4mm]
 \dfrac{h_\sF^2}{ \alpha_K^-} (j^{nc}_{f,\sF})^2, & F\in\cE_\sN, \\[4mm]
 h_\sF^2 \alpha_K^- \left(j^{nc}_{g,\sF}\right)^2, & F\in\cE_\sD.
\end{array}
\right.
 \]
 
 \begin{lem}\label{lem-nc1}
There exist a positive constant $C$ independent of $\a$ and the mesh size such that
\beq\label{edge-nc}
\eta^{rt}_{nc,\sF}\leq C\eta_{nc,\sF}
\quad \mbox{and}\quad
\eta^{ne}_{nc,\sF}\leq C\eta_{nc,\sF}
\eeq
\end{lem}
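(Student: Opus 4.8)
The plan is to estimate each recovered local quantity $\sd_{nc,rt,\sF}$ and $\rd_{nc,ne,\sF}$ directly by the corresponding jump, exactly in the spirit of the proof of Theorem~\ref{thm:eff-c}. First I would record the two scaling facts that drive everything: for a regular triangulation one has $\|\bpsi^{rt}_\sF\|_{0,\o_\sF}\le C\,h^2_\sF$ and $\|\bpsi^{ne}_\sF\|_{0,\o_\sF}\le C\,h^2_\sF$, together with the coefficient equivalences $a_{rt,\sF}\approx \alpha_\sF^+/(\alpha_\sF^-+\alpha_\sF^+)$, $1-a_{rt,\sF}\approx \alpha_\sF^-/(\alpha_\sF^-+\alpha_\sF^+)$ from (\ref{c-equi}), and the analogous identities for $a_{ne,\sF}$ obtained by replacing $\alpha$ with $\alpha^{-1}$ (so $a_{ne,\sF}\approx \alpha_\sF^-/(\alpha_\sF^-+\alpha_\sF^+)$, $1-a_{ne,\sF}\approx \alpha_\sF^+/(\alpha_\sF^-+\alpha_\sF^+)$, since the $\NE$ weight uses $(A^{-1}\bpsi^{ne}_\sF,\bpsi^{ne}_\sF)$).

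Next, for the flux indicator I would plug the explicit formula (\ref{app-err-f-nc-F}) into the definition of $\eta^{rt}_{nc,\sF}=\|\alpha^{-1/2}\sd_{nc,rt,\sF}\|_{0,\o_\sF}$ and split over $K_\sF^-$ and $K_\sF^+$. On $K_\sF^-$ the factor is $(1-a_{rt,\sF})^2/\alpha_\sF^-\approx (\alpha_\sF^-/(\alpha_\sF^-+\alpha_\sF^+))^2/\alpha_\sF^-$, and on $K_\sF^+$ it is $a_{rt,\sF}^2/\alpha_\sF^+\approx(\alpha_\sF^+/(\alpha_\sF^-+\alpha_\sF^+))^2/\alpha_\sF^+$; both sum to $\le C/(\alpha_\sF^-+\alpha_\sF^+)$. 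Combined with the basis bound this gives $(\eta^{rt}_{nc,\sF})^2\le C\,(j^{nc}_{f,\sF})^2 h_\sF^2/(\alpha_\sF^-+\alpha_\sF^+)$, which is exactly (up to constant) the first term of $\eta_{nc,\sF}^2$ on interior edges. On Neumann edges the single-element analogue gives $(\eta^{rt}_{nc,\sF})^2\le C(j^{nc}_{f,\sF})^2 h_\sF^2/\alpha_\sF^-$. For the gradient indicator I would do the same with (\ref{local-rho-nc}) inside $\eta^{ne}_{nc,\sF}=\|\alpha^{1/2}\rd_{nc,ne,\sF}\|_{0,\o_\sF}$: the weights now carry a factor $\alpha$ rather than $1/\alpha$, so on $K_\sF^-$ the factor is $\alpha_\sF^-(1-a_{ne,\sF})^2\approx \alpha_\sF^-(\alpha_\sF^+/(\alpha_\sF^-+\alpha_\sF^+))^2$ and on $K_\sF^+$ it is $\alpha_\sF^+ a_{ne,\sF}^2\approx \alpha_\sF^+(\alpha_\sF^-/(\alpha_\sF^-+\alpha_\sF^+))^2$; both are bounded by $\alpha_\sF^-\alpha_\sF^+/(\alpha_\sF^-+\alpha_\sF^+)$, yielding $(\eta^{ne}_{nc,\sF})^2\le C\,h_\sF^2\,\alpha_\sF^-\alpha_\sF^+/(\alpha_\sF^-+\alpha_\sF^+)\,(j^{nc}_{g,\sF})^2$, matching the second term of $\eta_{nc,\sF}^2$. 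The Dirichlet-edge case reduces to a single element and gives $h_\sF^2\alpha_\sF^-(j^{nc}_{g,\sF})^2$ as in the definition.

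The main obstacle is verifying the weight equivalences for $a_{ne,\sF}$ carefully, including the correct placement of $\alpha$ versus $\alpha^{-1}$ (the $\NE$ recovery minimizes $\|A^{-1/2}\cdot\|$ so the roles of $\alpha_\sF^-$ and $\alpha_\sF^+$ in the averaging weights are swapped relative to the $RT$ case), and making sure the bound on $\left(\int_F|j^{nc}_{g,\sF}|^2\,ds\right)^{1/2}$ versus the pointwise jump values is handled as in the mixed-element proof (since $j^{nc}_{g,\sF}$ is affine on $F$, the two are equivalent up to $h_\sF^{1/2}$, so using a constant representative introduces only harmless constants). Once these are in place, (\ref{edge-nc}) follows by taking square roots and absorbing the regularity constants; note that the sums and mesh-regularity arguments are identical on every edge type, so establishing the interior-edge estimate is the substantive content and the boundary cases are cosmetic.
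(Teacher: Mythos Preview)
Your treatment of the $RT$ bound is essentially the paper's: the same explicit formula~(\ref{app-err-f-nc-F}), the same weight equivalence~(\ref{c-equi}), and the same algebraic collapse to $h_\sF^2(j^{nc}_{f,\sF})^2/(\alpha_\sF^-+\alpha_\sF^+)$.

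For the $\NE$ bound you take a genuinely different route. The paper does not compute the $\NE$ averaging weights at all; instead it invokes the minimization characterization of $\rd_{nc,ne,\sF}$ to get $\|\alpha^{1/2}\rd_{nc,ne,\sF}\|_{0,\o_\sF}\le\|\alpha^{1/2}\brho^{nc}_{j,\sF}\|_{0,\o_\sF}$ for the one-sided competitor $\brho^{nc}_{j,\sF}=j^{nc}_{g,\sF}\bpsi^{ne,-}_\sF$ supported on $K_\sF^-$, assumes without loss of generality $\alpha_\sF^-\le\alpha_\sF^+$, and then uses $\alpha_\sF^-\le 2\,\alpha_\sF^-\alpha_\sF^+/(\alpha_\sF^-+\alpha_\sF^+)$. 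Your direct computation with the explicit weights is equally valid and parallels the $RT$ argument more closely; the paper's device is shorter and avoids having to derive an $\NE$ analogue of~(\ref{c-equi}).

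One point to clean up: your justification of the $\NE$ weight equivalences is internally inconsistent. You write that the $\NE$ recovery ``minimizes $\|A^{-1/2}\cdot\|$'' and that ``the $\NE$ weight uses $(A^{-1}\bpsi^{ne}_\sF,\bpsi^{ne}_\sF)$''; if that were so, $a_{ne,\sF}$ would coincide with $a_{rt,\sF}\approx\alpha_\sF^+/(\alpha_\sF^-+\alpha_\sF^+)$, not its swap. The equivalences you actually use, $a_{ne,\sF}\approx\alpha_\sF^-/(\alpha_\sF^-+\alpha_\sF^+)$ and $1-a_{ne,\sF}\approx\alpha_\sF^+/(\alpha_\sF^-+\alpha_\sF^+)$, correspond to minimizing $\|A^{1/2}\cdot\|$ (i.e.\ weights $(A\,\bpsi^{ne}_\sF,\bpsi^{ne}_\sF)$), which is precisely what the estimator $\eta^{ne}_{nc,\sF}=\|A^{1/2}\rd_{nc,ne,\sF}\|_{0,\o_\sF}$ requires and what the paper's proof uses implicitly through the minimization inequality. (The displayed formula $\beta_{ne,\sF}^\pm=(A^{-1}\bpsi_\sF^{ne},\bpsi_\sF^{ne})_{K_\sF^\pm}$ in Section~4.3.2 appears to be a misprint for $A$.) With that correction your computation goes through; note also that $j^{nc}_{g,\sF}$ is in fact constant on $F$ (since $\grad_h u_{nc}$ is piecewise constant), so the affine-versus-constant remark in your last paragraph is unnecessary here.
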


\begin{proof}
Without loss of generality, we  prove the validity of the lemma only for interior edges.
Assume that the triangulation is regular, then $\|\bphi^{rt}_\sF\|_{0,K}\leq C\,h_\sF$.
It follows from the definition of $\eta^{rt}_{nc,\sF}$, (\ref{sdncrtf}), and the equivalence (\ref{c-equi}) that
\begin{eqnarray*}
\eta^{rt}_{nc,1,\sF}
&=& \|\alpha^{-1/2} \sd_{nc,rt,\sF}\|_{0,\o_\sF}
 =\left( \|\alpha_{F^-}^{-1/2}\sd_{nc,rt,\sF}\|_{0,K^-_\sF}^2
 + \|\alpha_{F^+}^{-1/2}\sd_{nc,rt,\sF}\|_{0,K^+_\sF}^2\right)^{1/2}  \\[2mm]
 &\leq& C\,  h_\sF j_{f,\sF}^{nc}
 \left( \dfrac{1}{\alpha_\sF^-} \left(\dfrac{\alpha_\sF^-}{\alpha_\sF^- + \alpha_\sF^+} \right)^2
 +  \dfrac{1}{\alpha_\sF^+} \left(\dfrac{\alpha_\sF^+}{\alpha_\sF^- + \alpha_\sF^+} \right)^2 \right)^{1/2} \\[2mm]
 &=& C\,  \dfrac{h_\sF j_{f,\sF}^{nc}}{\sqrt{\alpha_\sF^- + \alpha_\sF^+}}
 \leq C \eta_{nc,\sF},
\end{eqnarray*}
which implies the first inequality in (\ref{edge-nc}). 

To prove the second inequality in (\ref{edge-nc}), for any $F\in\cE_\sI$, introduce
 \[
 \brho^{nc}_{j,\sF}=\left\{\begin{array}{ll}
                                        j^{nc}_{g,\sF} h_\sF\bpsi^{ne,-}_\sF, & \mbox{on }\, K^-_\sF,\\[3mm]
                                        0, & \mbox{on }\, K^+_\sF.
                                        \end{array}\right.
 \]
Without loss of generality, we assume that $\alpha_\sF^- \leq \alpha_\sF^+ $. (Otherwise, 
$\brho^{nc}_{j,\sF}$ may be redefined by exchanging $K^-_\sF$ and $K^+_\sF$.)
Since  $j^{nc}_{g,\sF}$ is a constant on $F$ and  $\|\bpsi^{ne}_\sF\|_K \approx C\,h_\sF$,
by the definitions of $\rd_{nc,ne,\sF}$ in (\ref{local-rho-nc}), we have
 \begin{eqnarray*}
 \eta^{ne}_{nc,\sF} 
 &=& \|\alpha^{1/2}\rd_{nc,ne,\sF}\|_{0,\o_\sF}
 \leq \|\alpha^{1/2}\brho^{nc}_{j,\sF}\|_{0,\o_\sF}
 = \sqrt{\alpha_\sF^-}\, \|\brho^{nc}_{j,\sF}\|_{0,K_\sF^-}\\[2mm]
 &= & \sqrt{\alpha_\sF^-}\,  j^{nc}_{g,\sF}  \|\bpsi^{ne}_\sF\|_{0,K_\sF^-} 
 \leq C \sqrt{\alpha_\sF^-}\,  j^{nc}_{g,\sF} h_\sF \\[2mm]
& \leq & C  \left(\dfrac{\alpha_K^+ \alpha_K^-}{\alpha_K^+ + \alpha_K^-} \right)^{1/2} j^{nc}_{g,\sF} h_\sF
 \leq  C \eta_{nc,\sF}.
\end{eqnarray*}
This completes the proof of the second inequality in (\ref{edge-nc}) and, hence, the lemma.
\end{proof}

\begin{thm}
The local indicators $\eta^{rh}_{nc,\sF}$, $\eta^{rh}_{nc,\sK}$, $\eta^{bd}_{nc,\sF}$, and
 $\eta^{bh}_{nc,\sK}$
 are efficient, i.e., there exists a constant $C>0$ independent of $\alpha$ and the mesh size such that
 \begin{equation} \label{nc-eff-F}
\eta^{bd}_{nc,\sF} \leq \eta^{rh}_{nc,\sF}  
\leq C\, \|\alpha^{1/2} \grad_h e_{nc}\|_{0,\o_\sF}
+C\left( \sum_{K\in\cT_\sF} H_{f,\sK}^2\right)^{1/2}
\end{equation}
and that 
\beq\label{nc-eff-K}
 \eta^{rh}_{nc,\sK},\,\, \eta^{bd}_{nc,\sK} 
 \leq C\, \|\alpha^{1/2} \grad_h e_{nc}\|_{0,\o_\sK}+
C\left( \sum_{T\in\cT_\sK} H_{f,\sT}^2\right)^{1/2}.
\eeq
\end{thm}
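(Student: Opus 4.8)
The plan is to run the same reduction used in the proof of Theorem~\ref{thm:eff-c}: bound the explicit indicators by the weighted residual‑type edge estimator $\eta_{nc,\sF}$ and then quote its robust local efficiency. First I would dispose of the comparison $\eta^{bd}_{nc,\sF}\le\eta^{rh}_{nc,\sF}$ (and the analogous element version). By construction $\sd_{nc,bdm,\sF}$ minimizes $\|A^{-1/2}\btau\|_{0,\o_\sF}$ over the constrained local space built from $BDM$, which contains the corresponding $RT$ space because $RT(K)\subset BDM(K)$ and the normal jump $j^{nc}_{f,\sF}$ is a constant that both realize; hence $\|A^{-1/2}\sd_{nc,bdm,\sF}\|_{0,\o_\sF}\le\|A^{-1/2}\sd_{nc,rt,\sF}\|_{0,\o_\sF}$. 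In the same way $\rd_{nc,nd,\sF}$ minimizes $\|A^{1/2}\btau\|_{0,\o_\sF}$ over a constrained $\ND$ space containing the $\NE$ one (since $\NE(K)\subset\ND(K)$ and $j^{nc}_{g,\sF}$ is a constant), so $\|A^{1/2}\rd_{nc,nd,\sF}\|_{0,\o_\sF}\le\|A^{1/2}\rd_{nc,ne,\sF}\|_{0,\o_\sF}$. Since $c_1+c_2=1$, inserting these into the definitions gives $\eta^{bd}_{nc,\sF}\le\eta^{rh}_{nc,\sF}$; it therefore suffices to bound the $RT$-$\NE$ indicators from above.

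Next, Lemma~\ref{lem-nc1} already provides $\|A^{-1/2}\sd_{nc,rt,\sF}\|_{0,\o_\sF}\le C\,\eta_{nc,\sF}$ and $\|A^{1/2}\rd_{nc,ne,\sF}\|_{0,\o_\sF}\le C\,\eta_{nc,\sF}$, hence $\eta^{rh}_{nc,\sF}\le C\,\eta_{nc,\sF}$. It then remains to establish the robust local efficiency of the residual estimator,
\[
\eta_{nc,\sF}\le C\left(\|\alpha^{1/2}\grad_h e_{nc}\|_{0,\o_\sF}+\left(\sum_{K\in\cT_\sF}H_{f,K}^2\right)^{1/2}\right),
\]
with $C$ independent of the jumps of $\alpha$; this is proved in \cite{CaZh:10a}. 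For completeness I would recall the argument: continuity of $\bsigma\cdot\bn$ and of $\grad u\cdot\bt$ gives $j^{nc}_{f,\sF}=-\jump{\bE_{nc}\cdot\bn_\sF}$ and $j^{nc}_{g,\sF}=-\jump{\grad_h e_{nc}\cdot\bt_\sF}$; testing each jump against an edge‑bubble extension on the two‑element patch $\o_\sF$, integrating by parts elementwise, and controlling the element residual produced for the flux jump by the standard element‑bubble bound that replaces $h_K\|f\|_{0,K}/\sqrt{\alpha_K}$ by $H_{f,K}+\|\alpha^{1/2}\grad_h e_{nc}\|_{0,K}$, one recovers $\eta_{nc,\sF}$ with the Petzoldt scaling of the $\alpha$‑weights; no quasi‑monotonicity is needed because the whole argument is local. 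Combining the two displays yields (\ref{nc-eff-F}), and the first step then gives the $BDM$-$\ND$ edge indicator.

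For the element indicators, note that $K\subset\o_\sF$ only when $F$ is one of the at most three edges of $K$, so $\sd_{nc,rt}|_K=\sum_{F\in\cE_\sK}\sd_{nc,rt,\sF}|_K$ and $\rd_{nc,ne}|_K=\sum_{F\in\cE_\sK}\rd_{nc,ne,\sF}|_K$; a triangle inequality gives $(\eta^{rh}_{nc,\sK})^2\le C\sum_{F\in\cE_\sK}(\eta^{rh}_{nc,\sF})^2$, and combining with (\ref{nc-eff-F}) together with the inclusions $\o_\sF\subset\o_\sK$ and $\cT_\sF\subset\cT_\sK$ for $F\in\cE_\sK$ yields (\ref{nc-eff-K}); the first step supplies the $BDM$-$\ND$ version.

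The step I expect to carry the real weight is the robust efficiency of $\eta_{nc,\sF}$ in the display above — in particular producing the weights $(\alpha_\sF^++\alpha_\sF^-)^{-1/2}$ and $(\alpha_\sF^+\alpha_\sF^-/(\alpha_\sF^++\alpha_\sF^-))^{1/2}$ with constants independent of the jump — but since this is exactly what is established in \cite{CaZh:10a}, the proof here reduces essentially to the two elementary minimization/nesting reductions above plus that citation.
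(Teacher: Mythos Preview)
Your proposal is correct and follows essentially the same route as the paper: the paper also obtains $\eta^{bd}_{nc,\sF}\le\eta^{rh}_{nc,\sF}$ from the minimization characterization and the nestings $RT^{nc}_{-1,F}\subset BDM^{nc}_{-1,F}$, $\NE^{nc}_{-1,F}\subset\ND^{nc}_{-1,F}$, then applies Lemma~\ref{lem-nc1} together with the robust efficiency of $\eta_{nc,\sF}$ from \cite{CaZh:10a} (Theorem~6.8 there), and finishes the element bounds by summing over the edges of $K$. Your additional sketch of the bubble-function argument for the efficiency of $\eta_{nc,\sF}$ is extra detail the paper simply delegates to the citation.
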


\begin{proof}
Let 
 \[
 \cV^{nc}_{-1,\sF}
 = \{ \btau\in\! L^2(\omega_\sF\!) \big|\, \btau|_\sK\!\!\in\!\cV(K)\,\,\forall\, K\in \cT_\sF,
\,  \jump{\btau\cdot\bn_\sF}_\sF\!\!
 =\! -j^{nc}_{f,\sF},\,
 \, \btau\cdot\bn_{_E}\! = 0\,\,
 \mbox{on}\, \,E \in\! \cE_{b, \sF}\!\}
 \]
with $\cV=RT\,\mbox{or}\, \BDM$ and let 
 \[
  \cW^{nc}_{-1,\sF}
  = \{ \btau\in\! L^2(\omega_\sF\!) \big|\, \btau|_\sK\!\!\in\!\cW(K)\,\,\forall\, K\in \cT_\sF,
\,  \jump{\btau\cdot\bt_\sF}_\sF\!\!
 =\! -j^{nc}_{g,\sF},\,
 \, \btau\cdot\bt_{_E}\! = 0\,\,
 \mbox{on} \,\,E \in\! \cE_{b, \sF}\!\}.
 \]
 with $\cW=\NE\,\mbox{or}\, \ND$.
Similar to Section 4.1, the approximation error fluxes $\sd_{nc,v,\sF}$ with $v= rt\mbox{ or } bdm$
and the approximation error gradients $\rd_{nc,w,\sF}$ with $w=ne\mbox{ or }nd$ 
are then the solutions of the minimization problems:
 \begin{eqnarray}\label{mini-flux-nc-1}
 && \|A^{-1/2} \sd_{nc, v,\sF} \|_{0,\o_\sF}
 =\min_{\btau\in\cV^{nc}_{-1,\sF}}\|A^{-1/2} \btau\|_{0,\o_\sF} \\[2mm] \label{mini-flux-nc-2}
 \,\mbox{ and } && 
 \|A^{1/2} \rd_{nc,w,\sF} \|_{0,\o_\sF}
 =\min_{\btau\in\cW^{nc}_{-1,\sF}}\|A^{1/2} \btau\|_{0,\o_\sF},
 \end{eqnarray}
respectively. Since 
 $RT^{nc}_{-1,F}\subset \BDM^{nc}_{-1,F}$ and 
$\NE^{nc}_{-1,F}\subset \ND^{nc}_{-1,F}$, 
the first inequality in (\ref{nc-eff-F}) follows from their definitions.
The second  inequality in (\ref{nc-eff-F}) is from the minimization problems in (\ref{mini-flux-nc-1})
and (\ref{mini-flux-nc-2}),
Lemma \ref{lem-nc1}, and Theorem 6.8 of \cite{CaZh:10a}. The bounds in (\ref{nc-eff-K})
 are straightforward from their definitions and inequality (\ref{nc-eff-F}).
\end{proof}

 \section{Numerical Experiments}
\setcounter{equation}{0}

In this section, we report some numerical results for an interface
problem with intersecting interfaces used by many authors, e.g.,
\cite{Kim:07,CaZh:09,CaZh:10a,CaZh:10b}, which is
considered as a benchmark test problem.
For simplicity, we only test the conforming element with explicit RT recovery. 
Other cases behave similarly. 

Let $\O=(-1,1)^2$ and
 \[
 u(r,\theta)=r^{\gamma}\mu(\theta)
 \]
in the polar coordinates at the origin with $\mu(\theta)$ being a
smooth function of $\theta$ \cite{CaZh:09}. The function
$u(r,\theta)$ satisfies the interface equation with $A= \a I$,
$\Gamma_N=\emptyset$, $f=0$, and
 \[
 \a(x)=\left\{\begin{array}{ll}
 R & \quad\mbox{in }\, (0,1)^2\cup (-1,0)^2,\\[2mm]
 1 & \quad\mbox{in }\,\O\setminus ([0,1]^2\cup [-1, 0]^2).
 \end{array}\right.
 \]
The $\gamma$ depends on the size of the jump.
In our test problem, $\gamma=0.1$ is chosen and is corresponding to
$R\approx 161.4476387975881$.
Note that the solution $u(r,\theta)$ is only in
$H^{1+\gamma-\epsilon}(\O)$ for any $\epsilon>0$ and, hence, it is
very singular for small $\gamma$ at the origin. This suggests that
refinement is centered around the origin.

\begin{figure}[!hts]
    \hfill
    \begin{minipage}[!hbp]{0.48\linewidth}
        \centering
        \includegraphics[width=0.99\textwidth,angle=0]{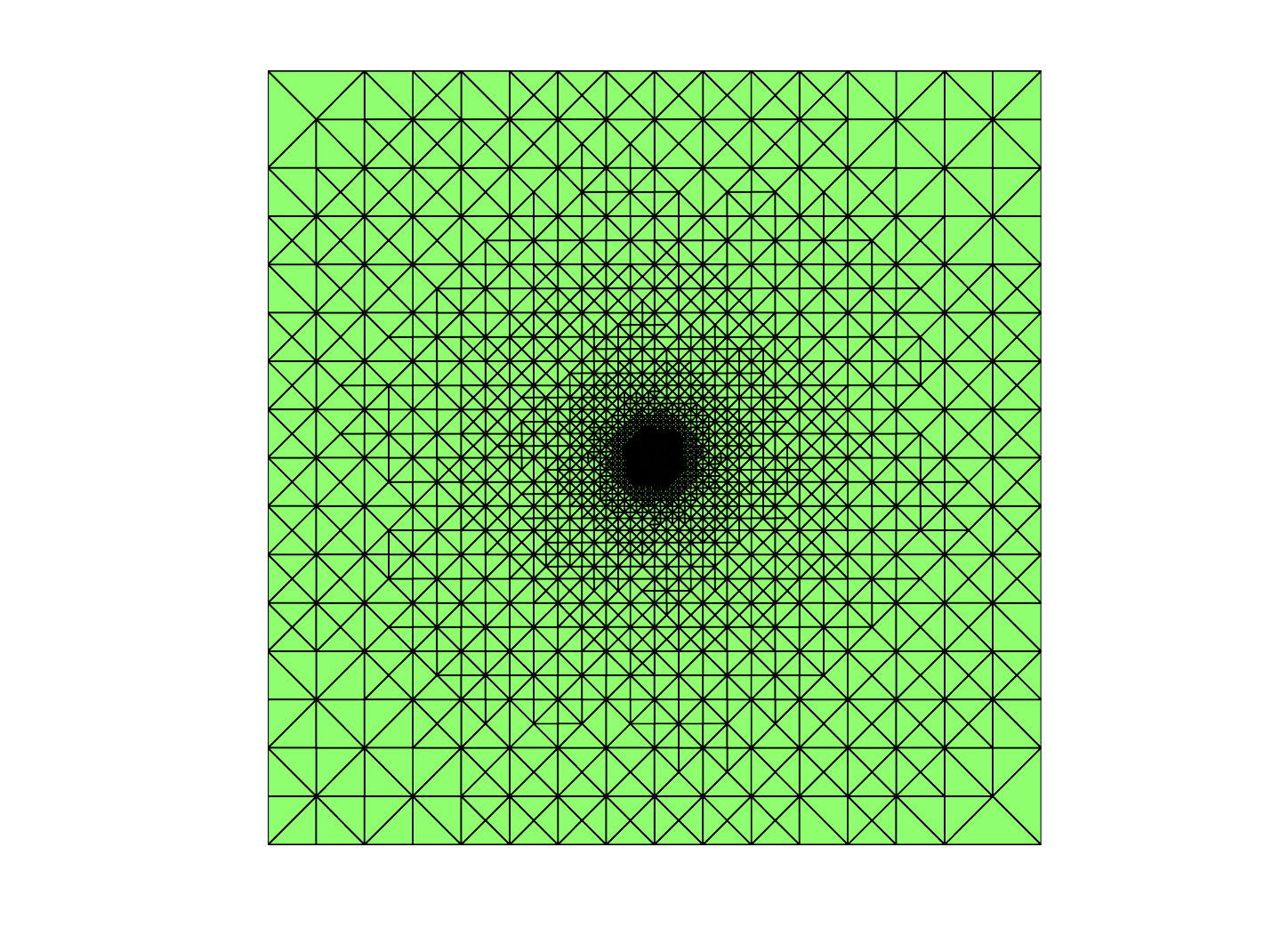}
        \caption{mesh generated by $\eta$}%
        \end{minipage}%
        \quad
    \begin{minipage}[!htbp]{0.48\linewidth}
        \centering
        \includegraphics[width=0.99\textwidth,angle=0]{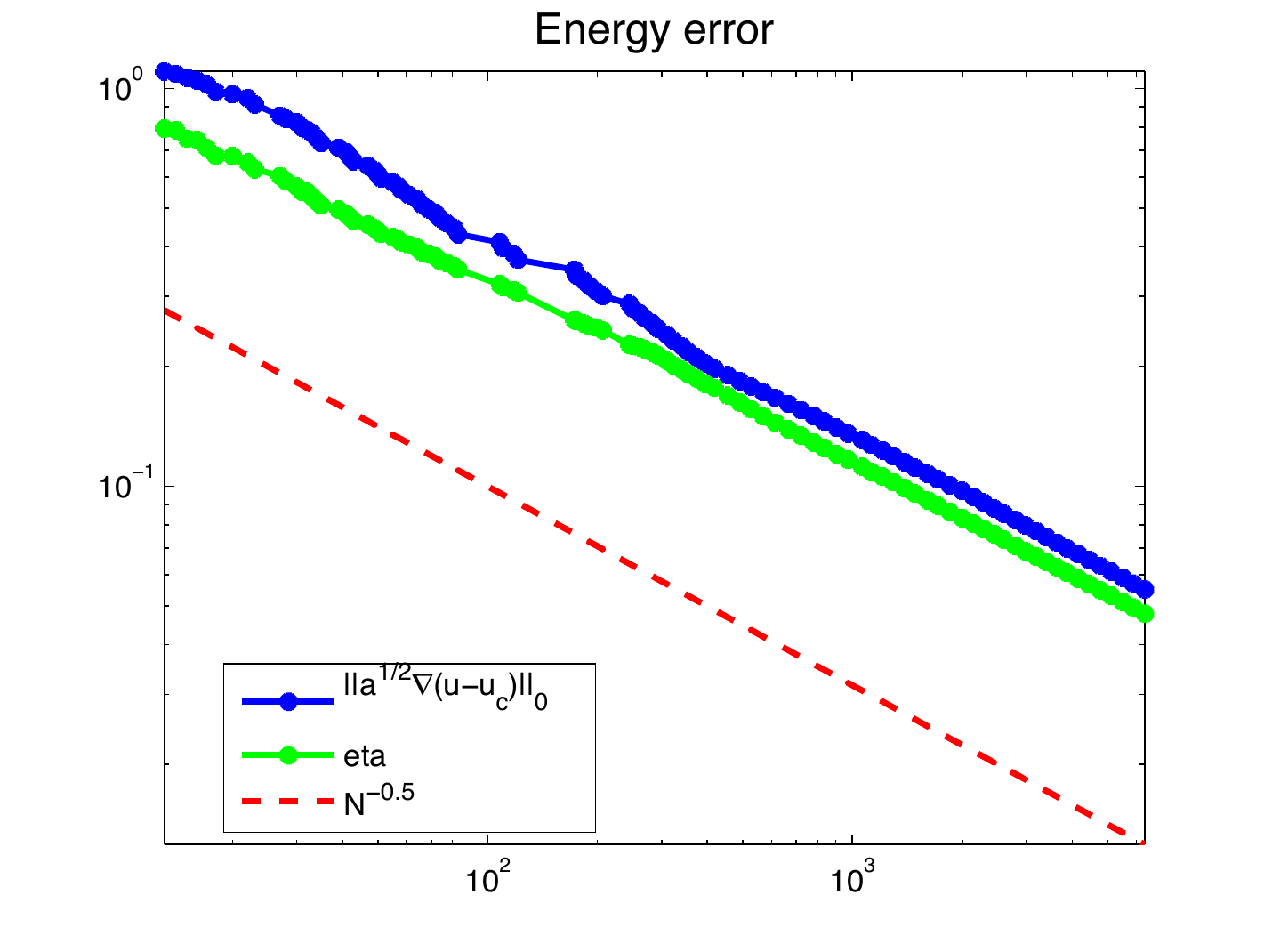}
        \caption{error and estimator $\eta$}%
    \end{minipage}%
        \hfill
\end{figure}

Mesh generated by $\eta_c^{rt}$ is shown in Figure 1. The refinement is centered at origin. Similar meshes for this test problem generated by other error estimators can be found in \cite{CaZh:09,CaZh:10a,CaZh:11}. 
The comparison of the error and the $\eta_c^{rt}$ is shown in Figure 2. The effectivity index is close to $1$. Moreover, the slope of the log(dof)- log(relative error) for $\eta_c^{rt}$ is $-1/2$, which indicates the optimal decay of the error with respect to the number of unknowns.

\appendix
\section{Basis Functions of the Lowest Order $RT$, $\BDM$, $\NE$, and $\ND$ Spaces}
\setcounter{equation}{0}

This appendix describes basis functions for the $RT$, $\BDM$, $\NE$,
and $\ND$ finite element spaces of the lowest order. The definition of these basis functions can also be found in Section 2.6 of \cite{BoBrFo:13}.

For a triangle $K$, denote by $\bx_i$, $\bx_j$, and $\bx_k$ its
three vertices sorted counterclockwise and denote by $F_i$, $F_j$,
and $F_k$ the edges opposite to the vertices $\bx_i$, $\bx_j$, and
$\bx_k$, respectively. 
The lengths, the unit tangent vectors, and the heights of the edges
are denoted by
 \[
 h_l=|\be_l|, \quad
 \bt_l=\dfrac{\be_l}{h_l}, \quad\mbox{and}\quad
 H_l
 \]
for $l=i,\,j,\,k$, respectively. Let $\lambda_i$, $\lambda_j$, and
$\lambda_k$ denote the barycentric coordinates of the triangle $K$
associated with vertices $\bx_i$, $\bx_j$, and $\bx_k$,
respectively. Then the unit outward vectors normal to the edges are
 \[
 \bn_l=-\dfrac{\grad \lambda_l}{ |\grad \lambda_l|}
 \quad\mbox{for }\,\, l=i,\,j,\,k.
 \]
Finally, denote by $|K|$ the area of the triangle $K$.
Now, we state basis functions associated with the edge $F_k$ as
follows:
 \begin{itemize}
 \item for RT
 \beq\label{rt}
 \bphi^{rt}_{_{F_k}}|_\sK  := \dfrac{1}{H_k}(\bx-\bx_k),
 \eeq
 \item for BDM (two basis functions associated with vertices $\bx_i$ and $\bx_j$)
 \beq\label{bdm}
 \bphi^{bdm}_{_{i,F_k}}|_\sK :=\  \dfrac{1}{H_k}(\bx_i-\bx_k)\lambda_i,
  \quad\mbox{and}\quad
\bphi^{bdm}_{_{j,F_k}}|_\sK :=\  \dfrac{1}{H_k}(\bx_j-\bx_k)\lambda_j,
 \eeq
 \item for $\NE$
 \beq\label{ne}
 \bphi^{ne}_{_{F_k}} |_\sK
 := h_k (\lambda_j \grad \lambda_i -\lambda_i \grad\lambda_j),
 \eeq
 \item for $\ND$ (two basis functions associated with vertices $\bx_i$ and $\bx_j$)
 \beq\label{nd}
  \bphi^{nd}_{_{i,F_k}}|_\sK
 :=  h_k \lambda_i \grad\lambda_j, 
 \quad\mbox{and}\quad
 \bphi^{nd}_{_{j,F_k}} |_\sK
 :=   h_k \lambda_j \grad\lambda_i. 
  \eeq
 \end{itemize}
It is easy to check that these basis functions satisfy the following
properties:
 \begin{itemize}
 \item for $RT$
 \beq \label{proprt}
 \bphi^{rt}_{_{F_k}} = \bphi^{bdm}_{_{i,F_k}} +\bphi^{bdm}_{_{j,F_k}} 
  \quad\mbox{and}\quad
 \left(\bphi^{rt}_{_{F_k}} \cdot
 \bn_{\ell}\right)|_{_{F_\ell}}
 = \delta_{\ell k}, \quad \ell = i,j,k;
 \eeq
 \item for $BDM$, 
  \beq 
 \left(\bphi^{bdm}_{_{i,F_k}} \cdot
 \bn_{\ell}\right)|_{_{F_\ell}}
 = \lambda_i \delta_{\ell k}
    \quad\mbox{and}\quad
 \left(\bphi^{bdm}_{_{j,F_k}} \cdot
 \bn_{k}\right)|_{_{F_k}} = \lambda_j \delta_{\ell k}, \quad \ell = i,j,k;
 \eeq
 Then it is clear that a linear function on $F_k$ can be represented by
 $\left(\bphi^{bdm}_{_{i,F_k}}\cdot
 \bn_{k}\right)|_{_{F_k}}$ and $\left(\bphi^{bdm}_{_{j,F_k}}\cdot
 \bn_{k}\right)|_{_{F_k}}$.
 Let $p$ be an affine function on $F_k$, then
 \beq \label{propbdm}
 p = p(\bx_i) \left(\bphi^{bdm}_{_{i,F_k}}\cdot\bn_k\right)|_{_{F_k}}
 +  p(\bx_j)
 \left(\bphi^{bdm}_{_{j,F_k}}\cdot\bn_k\right)|_{_{F_k}};
 \eeq
 \item for $\NE$
 \beq\label{propwh}
  \bphi^{ne}_{_{F_k}} =   \bphi^{nd}_{_{i, F_k}}-  \bphi^{nd}_{_{j, F_k}}
      \quad\mbox{and}\quad
 \left(\bphi^{ne}_{_{F_k}} \cdot \bt_{\ell}\right)|_{_{F_\ell}} =
 \delta_{\ell k}, \quad \ell = i, j, k;
 \eeq
 \item for $\ND$, 
   \beq  \label{propnd0}
 \left(\bphi^{nd}_{_{i,F_k}} \cdot
 \bt_{\ell}\right)|_{_{F_\ell}}
 = \lambda_i \delta_{\ell k}, 
       \quad\mbox{and}\quad
\left(\bphi^{nd}_{_{j,F_k}} \cdot
 \bt_{\ell}\right)|_{_{F_\ell}}
 =  -\lambda_j \delta_{\ell k}.
 \eeq
 Let $p$ be an affine function on $F_k$, then
 \beq \label{propnd}
 p = p(\bx_i) \left(\bphi^{nd}_{_{i,F_k}}\cdot\bt_k\right)|_{_{F_k}}
  - p(\bx_j)  \left(\bphi^{nd}_{_{j,F_k}}\cdot\bt_k\right)|_{_{F_k}}.
  \eeq
 \end{itemize}

\end{document}